\theoremstyle{plain}
\newtheorem{corollary}{Corollary}[section]
\newtheorem{definition}[corollary]{Definition}
\newtheorem{lemma}[corollary]{Lemma}
\newtheorem{prp}[corollary]{Proposition}
\newtheorem{remark}[corollary]{Remark}
\newtheorem{thm}[corollary]{Theorem}
\newfont{\sBlackboard}{msbm10 scaled 900}
\newcommand{\mylabel}[1]{\label{#1}
	\ifx\undefined\stillediting
	\else \fbox{$#1$}\fi }
\newcommand{\BE}{\begin{equation}}
\newcommand{\EEQ}{\end{equation}}
\newcommand{\rfb}[1]{\mbox{\rm
		(\ref{#1})}\ifx\undefined\stillediting\else:\fbox{$#1$}\fi}
\newfont{\Blackboard}{msbm10 scaled 1200}
\newfont{\roma}{cmr10 scaled 1200}
\newcommand{\bb}{\begin{equation}}
\newcommand{\bbb}{\end{equation}}
\newcommand{\mm}    {{\hbox{\hskip 0.5pt}}}
\newcommand{\bluff} {{\hbox{\raise 15pt \hbox{\mm}}}}
\def\section{\@startsection {section}{1}{\z@}{-3.5ex plus -1ex minus
		-.2ex}{2.3ex plus .2ex}{\large\bf}}
\begin{document}
\title{Strauss and Lions type theorems for the fractional Sobolev spaces with variable exponent and applications to nonlocal Kirchhoff-Choquard problem}
\author{Sabri Bahrouni\footnote{
sabri.bahrouni@fsm.rnu.tn} and Hichem Ounaies
\footnote{ hichem.ounaies@fsm.rnu.tn}\\
Mathematics Department, Faculty of Sciences, University of Monastir,\\ 5019 Monastir, Tunisia}
\maketitle

\begin{abstract}
This paper deals with Strauss and Lions-type theorems for fractional Sobolev spaces with variable exponent $W^{s,p(.),\tilde{p}(.,.)}(\Omega)$, when $p$ and $\tilde{p}$ satisfies some conditions. As application, we study
the existence of solutions for a class of  Kirchhoff-Choquard problem in $\mathbb{R}^N$.
\end{abstract}

{\small \textbf{2010 Mathematics Subject Classification:} Primary: 35J60, 35S11; Secondary: 35J91, 35S30, 46E35, 58E30.}

{\small \textbf{Keywords:} Fractional Sobolev space with variable exponent; Strauss compact embedding; Lions-type theorem; Kirchhoff-Choquard problem.}

\section{Introduction}

Recently, U. Kaufmann {\it et al.} \cite{Kaufmann} introduced a new class of fractional Sobolev spaces with variable exponents $W^{s,p(.),\tilde{p}(.,.)}(\Omega)$  defined as
$$W^{s,p(.),\tilde{p}(.,.)}(\Omega)=\bigg{\{}u\in L^{p}(\Omega):\ \int_{\Omega}\int_{\Omega}\frac{|u(x)-u(y)|^{\tilde{p}(x,y)}}{|x-y|^{N+s\tilde{p}(x,y)}}dxdy<\infty\bigg{\}},$$
where $\Omega$  is a Lipschitz bounded domain in $\mathbb{R}^N$ $(N\geq2)$, $s\in(0,1)$, $p\in C(\overline{\Omega},(1+\infty))$ and $\tilde{p}\in C(\mathbb{R}^N\times\mathbb{R}^N,(1+\infty))$ satisfying $\tilde{p}(x,y)=\tilde{p}(y,x)$ for all $x,y\in\mathbb{R}^N$.\\
 With the  restriction $\tilde{p}(x,x)<p(x)$ for all $x\in\overline{\Omega}$, they proved the compact embedding $W^{s,p(.),\tilde{p}(.,.)}(\Omega)\hookrightarrow\hookrightarrow L^{r(x)}(\Omega)$ for any $r\in C(\overline{\Omega})$ satisfying $1<r(x)<\tilde{p}^{*}_{s}:=\frac{N\tilde{p}(x,x)}{N-s\tilde{p}(x,x)}$ for all $x\in\overline{\Omega}$. Using this compact embedding result, the authors proved the existence of solutions for a nonlocal problem involving the fractional $p(x)$-Laplacian by applying a direct method of Calculus of Variations.
In \cite{Kim}, the authors showed the continuous embedding of $W^{s,p(.),\tilde{p}(.,.)}(\mathbb{R}^N)\hookrightarrow L^{r(x)}(\mathbb{R}^N)$
for any $r\in C(\overline{\Omega})$ satisfying $1<r(x)<\tilde{p}^{*}_{s}=\frac{N\tilde{p}(x,x)}{N-s\tilde{p}(x,x)}$ for all $x\in\mathbb{R}^N$, and  as application, they obtained a-priori bounds and multiplicity of solutions for some nonlinear elliptic problems involving the fractional $p(x)$-Laplacian. In \cite{anouar1, anouar2}, the authors gave some further basic properties both on the function space $W^{s,p(.),\tilde{p}(.,.)}(\mathbb{R}^N)$ and the related nonlocal operator $(-\triangle)^s_{\tilde{p}(.,.)}$, where the fractional $p(x)$-Laplacian $(-\triangle)^s_{\tilde{p}(.,.)}$ is given by
$$(-\triangle)^s_{\tilde{p}(.,.)}u(x):=P.V.\int_{\mathbb{R}^N}\frac{|u(x)-u(y)|^{\tilde{p}(x,y)-2}(u(x)-u(y))}{|x-y|^{N+s\tilde{p}(x,y)}}dy,\ \text{for all}\ x\in\Omega,$$
$P.V.$ is a commonly used abbreviation in the principal value sense. Note that the operator $(-\triangle)^s_{\tilde{p}(.,.)}$ is the fractional version of the well known $p(x)-$Laplacian operator $\triangle_{p(x)}=div(|\nabla u(x)|^{p(x)-2}u(x))$ and in the constant exponent case, it is known as the fractional $p$-Laplacian operator $(-\triangle)^s_{p}$. There are just a few papers dealing with this operator, we refer the readers to \cite{Ali, Xiang1, Azroul1, Azroul2, KYHO}.\\

Many nonlinear problems of mathematical physics are posed on unbounded domains of $\mathbb{R}^N$. The unboundedness of the domain generally prevents the study of these types of problems by general methods of nonlinear analysis due to the lack of compactness.
 In many different cases, it has been observed that by restricting to sub-spaces formed by functions respecting some symmetries of the problem, some forms of compactness were obtained.\\
 Denote by
 $$W^{s,p(.),\tilde{p}(.,.)}_{rad}(\mathbb{R}^{N})=\{u\in W^{s,p(.),\tilde{p}(.,.)}(\mathbb{R}^{N}):\ u\ \text{is radially symmetric}\}.$$

  By $u$ is radially symmetric we mean, a function $u:\mathbb{R}^N\rightarrow\mathbb{R}$ satisfying $u(x)=u(y)$ for all $x,y\in\mathbb{R}^N$ such that $\|x\|=\|y\|$.
 Strauss was the first who observed that there exists an interplay between the regularity of the function and its radially symmetric property, (see \cite{Strauss}). Later on, Strauss's result was generalized in many directions, see \cite{Alves1, Fan1, Lions, daniel, giovanni1, napoli, Morry} for a survey of related results and elementary proofs of some of them within the framework of different spaces.
 To the author's best knowledge, there have no works dealing with Strauss and Lion's results for the fractional Sobolev spaces with variable exponents. Thus, the first purpose of this paper is to give a general version of a Lions-type lemma and clearly highlight the compactness result of Strauss.

 Denote
$$C_+(\overline{\Omega})=\bigg{\{}h\in C(\Omega):\ 1<\inf_{x\in\overline{\Omega}}h(x)\leq\sup_{x\in\overline{\Omega}}h(x)<\infty\bigg{\}},$$
and for $h\in C_+(\overline{\Omega})$, denote by $$h_-(\Omega)=\inf_{x\in\overline{\Omega}}h(x)\ \ \text{and}\ \ h_+(\Omega)=\sup_{x\in\overline{\Omega}}h(x).$$
In what follows, for $p,q\in C_+(\overline{\Omega})$,  we denote by $p\ll q$ in $\Omega$ provided $\inf_{x\in\Omega}(q(x)-p(x))>0$. We state now our main results.

\begin{thm}\label{Lions}({\bf  A Lions-type lemma for $\mathbf{W^{s,p(.),\tilde{p}(.,.)}}$})
   Let $0<s<1$, $p\in C_+(\mathbb{R}^N)$ be a uniformly continuous function and $\tilde{p}\in C_+(\mathbb{R}^N\times\mathbb{R}^N)$ is symmetric, i.e., $\tilde{p}(x,y) = \tilde{p}(y,x)$ for all $x,y\in \mathbb{R}^N$ satisfying, $$s\tilde{p}_+:=s\sup_{(x,y)\in\overline{\Omega}}\tilde{p}(x,y)<N\ \ \ \text{and}\ \ \ \tilde{p}(x,y)\leq p(x)\ \ \ \text{for all}\ \ (x,y)\in\mathbb{R}^N\times\mathbb{R}^N.$$
     Let $(u_n)$ be a bounded sequence of $W^{s,p(.),\tilde{p}(.,.)}(\mathbb{R}^N)$ such that
  \begin{equation}\label{6}
  \sup_{y\in\mathbb{R}^N}\int_{B(y,r)}|u_n|^{q(x)}dx\rightarrow0,\ n\rightarrow+\infty,
   \end{equation}
   for some $r>0$ and $q\in C_+(\mathbb{R}^N)$ satisfying $p\ll q\ll\tilde{p}^{*}_s.$
  Then $$u_n\rightarrow0\ \ \ \text{in}\ \ \ L^{\alpha(x)}(\mathbb{R}^N)\ \ \text{for any}\  p\ll\alpha\ll\tilde{p}^{*}_s.$$
\end{thm}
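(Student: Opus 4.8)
The plan is to transport to the variable‑exponent fractional setting the classical covering–interpolation–summation scheme behind Lions' vanishing lemma \cite{Lions}. First I would reduce the problem: since all the exponents occurring here belong to $C_+(\mathbb{R}^N)$, for a sequence the convergence $u_n\to 0$ in $L^{\alpha(\cdot)}(\mathbb{R}^N)$ is equivalent to the vanishing of the modular $\int_{\mathbb{R}^N}|u_n|^{\alpha(x)}\,dx$, so it suffices to prove the latter. Next, by the continuous embedding $W^{s,p(\cdot),\tilde p(\cdot,\cdot)}(\mathbb{R}^N)\hookrightarrow L^{r(\cdot)}(\mathbb{R}^N)$, valid for every $r$ with $p\ll r\ll\tilde p^{*}_s$ (cf.~\cite{Kim}), the sequence $(u_n)$ is bounded in each such $L^{r(\cdot)}(\mathbb{R}^N)$; in particular every subcritical modular $\int_{\mathbb{R}^N}|u_n|^{r(x)}\,dx$ stays bounded in $n$. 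This is essentially the only place where the $W^{s,p(\cdot),\tilde p(\cdot,\cdot)}$‑bound is used, and the uniform continuity of $p$ enters here, being what makes this embedding work on the unbounded domain.

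I would first treat the range $q\ll\alpha\ll\tilde p^{*}_s$. Cover $\mathbb{R}^N$ by balls $B_i=B(z_i,r)$ with $z_i$ running over a fixed lattice, so that $\sum_i\mathbf{1}_{B_i}\le\kappa=\kappa(N)$. Choose $m\in C_+(\mathbb{R}^N)$ with $q\ll\alpha\ll m\ll\tilde p^{*}_s$ and set $\lambda(x):=\frac{m(x)-\alpha(x)}{m(x)-q(x)}$, which takes values in a compact interval $[\lambda_0,\lambda_1]\subset(0,1)$. From the pointwise identity $|u_n|^{\alpha(x)}=(|u_n|^{q(x)})^{\lambda(x)}(|u_n|^{m(x)})^{1-\lambda(x)}$, replacing $\lambda(x)$ by its two extreme values and applying Hölder's inequality on $B_i$ with the constant conjugate pairs $(1/\lambda_j,1/(1-\lambda_j))$, one obtains the ball‑wise modular estimate
\[
\int_{B_i}|u_n|^{\alpha(x)}\,dx\ \le\ \sum_{j=0,1}\Big(\int_{B_i}|u_n|^{q(x)}\,dx\Big)^{\lambda_j}\Big(\int_{B_i}|u_n|^{m(x)}\,dx\Big)^{1-\lambda_j}.
\]
By \eqref{6}, $\eta_n:=\sup_i\int_{B_i}|u_n|^{q(x)}\,dx\to 0$, while bounded overlap gives $\sum_i\int_{B_i}|u_n|^{m(x)}\,dx\le\kappa\int_{\mathbb{R}^N}|u_n|^{m(x)}\,dx=O(1)$. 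Summing the inequality above over $i$ — after splitting the covering into the finitely many balls on which $\int_{B_i}|u_n|^{m(x)}\,dx\ge 1$ and the rest, and using on the latter the boundedness of $(u_n)$ in the subcritical Lebesgue spaces from the first step — one extracts $\eta_n$ from the first factor with a strictly positive power while keeping the complementary series bounded in $n$, and concludes $\int_{\mathbb{R}^N}|u_n|^{\alpha(x)}\,dx\le o(1)\cdot O(1)\to 0$. Hence $u_n\to 0$ in $L^{\alpha(\cdot)}(\mathbb{R}^N)$ for every $\alpha$ with $q\ll\alpha\ll\tilde p^{*}_s$.

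To reach the full range, fix $\alpha$ with $p\ll\alpha\ll\tilde p^{*}_s$ and choose $\beta\in C_+(\mathbb{R}^N)$ with $\max\{q,\alpha\}\ll\beta\ll\tilde p^{*}_s$ (possible because $\max\{q,\alpha\}\ll\tilde p^{*}_s$); by the case just treated, $u_n\to 0$ in $L^{\beta(\cdot)}(\mathbb{R}^N)$. Pick a constant $\vartheta\in(0,1)$ close to $1$ and let $\tilde\alpha\in C_+(\mathbb{R}^N)$ be defined by $\frac{1}{\alpha(x)}=\frac{\vartheta}{\tilde\alpha(x)}+\frac{1-\vartheta}{\beta(x)}$; since $\alpha\ll\beta$, for $\vartheta$ near $1$ one has $p\ll\tilde\alpha\ll\alpha$, so $(u_n)$ is bounded in $L^{\tilde\alpha(\cdot)}(\mathbb{R}^N)$ by the embedding of \cite{Kim}. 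The interpolation inequality for variable‑exponent Lebesgue norms (a consequence of the generalized Hölder inequality) then yields $\|u_n\|_{L^{\alpha(\cdot)}(\mathbb{R}^N)}\le C\|u_n\|_{L^{\tilde\alpha(\cdot)}(\mathbb{R}^N)}^{\vartheta}\|u_n\|_{L^{\beta(\cdot)}(\mathbb{R}^N)}^{1-\vartheta}=O(1)\cdot o(1)\to 0$, which completes the proof.

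The step I expect to be the real obstacle is the summation over the covering in the core range. With variable exponents one has to control the gap between the local modulars $\int_{B_i}|u_n|^{\alpha(x)}\,dx$ and the associated Luxemburg norms — the infima and suprema $\alpha_-,\alpha_+,q_-,q_+,m_-,m_+$ enter at different places — so that, after summing over the infinitely many (and mostly low‑mass) balls, the vanishing factor $\eta_n$ is retained with a strictly positive power while the complementary series stays bounded in $n$. This is a delicate bookkeeping exercise: it forces the particular choice of the auxiliary exponent $m$ close to $\alpha$ (so that $\lambda_0,\lambda_1$ are small) together with the splitting of the covering according to the size of $\int_{B_i}|u_n|^{m(x)}\,dx$, and it is where the structural hypotheses $\tilde p(x,y)\le p(x)$ and $p\ll\alpha$ are brought to bear. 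Everything else — the ball‑wise modular estimate, the final interpolation step, and the reductions — is soft.
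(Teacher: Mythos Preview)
Your proposal has the right architecture (covering, ball-wise interpolation, summation, then a global interpolation for the full range), and Case~2 matches the paper. But the summation step in Case~1 has a genuine gap that your proposed fixes do not close.

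After your ball-wise H\"older estimate you are left with
\[
\sum_i \Big(\int_{B_i}|u_n|^{q(x)}\,dx\Big)^{\lambda_j}\Big(\int_{B_i}|u_n|^{m(x)}\,dx\Big)^{1-\lambda_j},
\]
and since $1-\lambda_j<1$ the sum $\sum_i b_i^{1-\lambda_j}$ is \emph{not} controlled by $\sum_i b_i$. A concrete obstruction: take $a_i=b_i=1/n$ for $i=1,\dots,n$ and $0$ otherwise; then $\sum a_i=\sum b_i=1$, $\sup_i a_i=1/n\to 0$, yet $\sum_i a_i^{\lambda_j}b_i^{1-\lambda_j}=n\cdot n^{-1}=1$ for every $n$. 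Splitting into the finitely many balls with $b_i\ge 1$ and the rest does not help, because on the low-mass balls $b_i^{1-\lambda_j}\ge b_i$. Pushing $m$ close to $\alpha$ makes $1-\lambda_j$ close to $1$ but never $\ge 1$, so the obstruction persists. In short, with only Lebesgue modulars on the right you cannot extract a vanishing factor while keeping the complementary series summable.

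The paper resolves this by replacing the Lebesgue modular on each ball by the local \emph{Sobolev} modular $\tilde\rho_{B_i}(u_n)$. Concretely, with $\beta=\alpha+\varepsilon$ the local embedding gives $|u_n|_{\beta,B_i}\le c\,|u_n|_{s,p,\tilde p,B_i}$, and the key step is the claim
\[
\frac{\alpha_-(B_i)}{\tilde p_+(B_i)\,m'_+(B_i)}\ \ge\ 1\qquad\text{for }r,\varepsilon\text{ small,}
\]
whose proof is exactly where the uniform continuity of $p$, the structural hypothesis $\tilde p(x,y)\le p(x)$, and $p\ll\alpha$ are used. This claim guarantees that the relevant power of the local Luxemburg norm is bounded by $\tilde\rho_{B_i}(u_n)+\tilde\rho_{B_i}(u_n)^{\alpha_+}$, i.e.\ by the local Sobolev modular raised to exponents $\ge 1$. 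Since $\sum_i c_i^{\gamma}\le(\sum_i c_i)^{\gamma}$ for $\gamma\ge 1$ and $\sum_i\tilde\rho_{B_i}(u_n)\le \kappa\,\tilde\rho_{\mathbb{R}^N}(u_n)$ by bounded overlap, the sum is now controlled and the factor $\tau_n^{1/m_+}\to 0$ survives. So the missing idea is: interpolate against the local Sobolev norm (not an $L^m$ modular), and use the three structural hypotheses to force the resulting exponent on $\tilde\rho_{B_i}$ to be at least $1$.
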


An important consequence of the previous Theorem, is the Strauss's compact embedding.

\begin{thm}\label{Strauss}({\bf A compactness result for radial functions})
  Let $0<s<1$,
   $p\in C_+(\mathbb{R}^N)$ be a uniformly continuous, radially symmetric function and $\tilde{p}\in C_+(\mathbb{R}^N\times\mathbb{R}^N)$ such that $$s\tilde{p}_+<N\ \ \ \text{and}\ \ \ \tilde{p}(x,y)\leq p(x)\ \ \ \text{for all}\ \ (x,y)\in\mathbb{R}^N\times\mathbb{R}^N.$$
   Then, for any function $\alpha\in C_+(\mathbb{R}^N)$ satisfying

  $$p\ll\alpha\ll \widetilde{p}^{*}_s:=\frac{N\tilde{p}(x,x)}{N-s\tilde{p}(x,x)},$$ 
the embedding $$W^{s,p(.),\tilde{p}(.,.)}_{rad}(\mathbb{R}^N)\hookrightarrow L^{\alpha(x)}(\mathbb{R}^N),$$ is compact.
\end{thm}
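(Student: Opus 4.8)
The plan is to deduce Theorem~\ref{Strauss} from Theorem~\ref{Lions} by showing that a bounded sequence of radial functions automatically satisfies the vanishing hypothesis \eqref{6}. Let $(u_n)$ be bounded in $W^{s,p(.),\tilde{p}(.,.)}_{rad}(\mathbb{R}^N)$; up to a subsequence we may assume $u_n\rightharpoonup u$ weakly, and after replacing $u_n$ by $u_n-u$ (which stays radial and bounded, using the weak lower semicontinuity of the Gagliardo seminorm) it suffices to treat the case $u_n\rightharpoonup 0$ and prove $u_n\to 0$ strongly in $L^{\alpha(x)}(\mathbb{R}^N)$ for every $\alpha$ with $p\ll\alpha\ll\tilde{p}^*_s$. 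The heart of the matter is the classical Strauss radial-decay estimate: I would first prove that there is a constant $C>0$ such that every radial $v\in W^{s,p(.),\tilde{p}(.,.)}(\mathbb{R}^N)$ satisfies a pointwise bound of the form $|v(x)|\le C\,\|v\|\,|x|^{-\gamma}$ for $|x|\ge 1$, with some $\gamma=\gamma(N,s,\tilde{p})>0$; in the variable-exponent fractional setting this requires controlling $|v(x)-v(y)|$ for $x,y$ on the same sphere-annulus by the double integral $\iint |v(x)-v(y)|^{\tilde{p}(x,y)}/|x-y|^{N+s\tilde{p}(x,y)}$, splitting the integration into the regions where $\tilde p\ge$ or $<$ some reference value to pass between the modular and the norm, and exploiting that on an annulus $\{R<|x|<2R\}$ the measure of points at comparable distance scales like a power of $R$.

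Once this decay estimate is in hand, the vanishing condition follows quickly: for $|y|$ large, $\int_{B(y,r)}|u_n|^{q(x)}\,dx$ is bounded by $C\,\sup_{B(y,r)}|u_n|^{q_- - \delta}\cdot(\text{small})$ using the decay $|u_n|\lesssim |x|^{-\gamma}$ and the boundedness of $\|u_n\|$, so the supremum over $|y|\ge\rho$ tends to $0$ as $\rho\to\infty$ uniformly in $n$; meanwhile on the fixed ball $|y|\le\rho$ the local compact embedding of Kaufmann et al.\ (valid on bounded Lipschitz domains for $1<r(x)<\tilde p^*_s$, which covers $q$ since $q\ll\tilde p^*_s$) together with $u_n\rightharpoonup 0$ gives $\int_{B(y,r)}|u_n|^{q(x)}\to 0$. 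Combining the two ranges of $y$ yields \eqref{6}, and Theorem~\ref{Lions} then delivers $u_n\to 0$ in $L^{\alpha(x)}(\mathbb{R}^N)$ for all admissible $\alpha$, i.e.\ compactness of the embedding. I should also record at the outset that the hypothesis $p\ll\alpha$ forces $\alpha_->p_+>1$, so $L^{\alpha(x)}$ is a genuine separable reflexive space and norm convergence is equivalent to modular convergence, which the Lions lemma provides.

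The main obstacle I anticipate is establishing the radial pointwise decay estimate with variable exponent $\tilde p(x,y)$: unlike the constant-exponent case one cannot simply invoke a fixed fractional Morrey-type inequality, and one must be careful that the exponent $\tilde p(x,y)$ appearing in the denominator weight $|x-y|^{N+s\tilde p(x,y)}$ varies as $x,y$ move. The clean way around this is to localize — on any annulus the oscillation of $\tilde p$ is controlled, so one can replace $\tilde p(x,y)$ by its infimum/supremum on that annulus at the cost of constants depending only on $N$, $s$, and the modulus of continuity of $\tilde p$, then apply the constant-exponent radial estimate there and sum the resulting geometric series in the dyadic radii. A secondary technical point is the passage between modular and norm for the Gagliardo term when the modular is not close to $1$; this is handled by the standard unit-ball/elsewhere dichotomy for modulars, which I would invoke as a known property of variable-exponent Lebesgue spaces.
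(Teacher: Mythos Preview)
Your overall architecture matches the paper's: reduce to $u_n\rightharpoonup 0$, verify the vanishing hypothesis \eqref{6} of Theorem~\ref{Lions}, and conclude. The divergence is entirely in how you propose to get vanishing on balls with large centers. You reach for a pointwise Strauss decay estimate $|v(x)|\le C\|v\|\,|x|^{-\gamma}$ and then spend most of your proposal worrying (rightly) about how to prove such a bound in the variable-exponent fractional setting. The paper avoids this entirely. Its argument is purely integral and elementary: since $u_n$ and $p$ are radial, $\int_{B(y,r)}|u_n|^{p(x)}\,dx$ depends only on $|y|$; for $|y|$ large one can place $\gamma(y)\to\infty$ pairwise disjoint balls of radius $r$ with centers on the sphere $\{|z|=|y|\}$, and since the total $L^{p(x)}$ modular is bounded by some $c$, each ball carries at most $c/\gamma(y)\to 0$. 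That is the whole mechanism for large $|y|$; no pointwise control, no annulus localization, no modular/norm gymnastics on the Gagliardo term. The near-origin part is handled, as you also suggest, by the compact embedding on a fixed large ball.

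So your route is not wrong in principle, but it trades an elementary counting trick for a substantial lemma that you have not actually proved and that is not available off the shelf in this setting. The ``main obstacle'' you flag---establishing a uniform radial decay estimate for $W^{s,p(\cdot),\tilde p(\cdot,\cdot)}_{rad}$ via localization to annuli and reduction to constant exponents---is a genuine piece of work, and until it is carried out your argument has a gap precisely at its hardest step. The paper's proof shows that this gap is unnecessary: you only need that the \emph{integrals} over translated balls are equal for centers on a common sphere, which is immediate from radiality of $u_n$ and $p$. Note also that the paper verifies vanishing with exponent $p(x)$ itself rather than an auxiliary $q$ with $p\ll q$; this keeps the argument inside the $L^{p(x)}$ modular, which is exactly the quantity already controlled by boundedness of $(u_n)$.
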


As a second goal of this paper, we investigate the existence of solution of the following class of Kirchhoff-Choquard equation
\begin{equation}\label{eq1}
\begin{cases}
M\big{(}X(u)\big{)}
(-\triangle)^s_{\tilde{p}(.,.)}u(x)=\bigg{(}\displaystyle\int_{\mathbb{R}^N}
\displaystyle\frac{F(y,u(y))}{|x-y|^{\lambda(x,y)}}dy\bigg{)}f(x,u(x))\ \ \text{in}\ \ \mathbb{R}^N, \\
 \ \ \ \ \ \ \ u\in W^{s,p(.),\tilde{p}(.,.)}(\mathbb{R}^N),
 \end{cases}
 \end{equation}
where $$X(u):=\int_{\mathbb{R}^N}\int_{\mathbb{R}^N}\frac{|u(x)-u(y)|^{\tilde{p}(x,y)}}{\tilde{p}(x,y)|x-y|^{N+s\tilde{p}(x,y)}}dxdy,$$
$M:\mathbb{R}^+_0\rightarrow\mathbb{R}^+$ is a continuous function, $\lambda:\mathbb{R}^N\times\mathbb{R}^N\rightarrow\mathbb{R}$ is symmetric function, i.e., $\lambda(x,y)=\lambda(y,x)$ for all $(x,y)\in\mathbb{R}^N\times\mathbb{R}^N$ with $0<\lambda_{-}\leq\lambda_{+}<N$ and $f:\mathbb{R}^N\times\mathbb{R}\rightarrow\mathbb{R}$ is Carath\'eodory functions, $F(x,t)$ is the primitive of $f(x,t)$, that is, $$F(x,t)=\int_{0}^{t}f(x,\tau)d\tau.$$

We say that $\tilde{p}$ is radially symmetric in $\mathbb{R}^N\times\mathbb{R}^N$, if
\begin{equation}\label{tilderadial}
  \tilde{p}(\|x_1\|,\|y_1\|)=\tilde{p}(\|x_2\|,\|y_2\|),
\end{equation}
for all $x_1,x_2,y_1,y_2\in\mathbb{R}^N$ and $\|x_1\|=\|x_2\|$, $\|y_1\|=\|y_2\|$.\\

The nonlinearity on the right side of \eqref{eq1} is motivated by the Choquard equation $$-\triangle u+u=(I_{\alpha}\ast|u|^p)|u|^{p-2}u\ \ \text{in}\ \mathbb{R}^N.$$
Here, $I_{\alpha}$ is the Riesz potential of order $\alpha\in(0,N)$  defined for each point $x\in\mathbb{R}^N\setminus\{0\}$ by $$I_{\alpha}(x)=\frac{A_{\alpha}}{|x|^{N-\alpha}}\ \ \ \ \text{where}\ \ \ A_{\alpha}=\frac{\Gamma(\frac{N-\alpha}{2})}{\Gamma(\frac{\alpha}{2})\pi^{N/2}2^{\alpha}}.$$
We mention \cite{Lieb, Lions1, Menzela,Moroz,LIXIA WANG, Tang} for Choquard problems involving the classical Laplacian operator.\\

More recently, in \cite{Alves}, Alves and Tavares  have proved a Hardy-Littlewood-Sobolev inequality for variable
exponents. They applied this inequality together with variational method to
establish the existence of solutions for a class of the following Choquard equations involving the
p(x)-Laplacian operator
$$\begin{cases}
(-\triangle)_{p(x)}u(x)=\bigg{(}\displaystyle\int_{\mathbb{R}^N}
\displaystyle\frac{F(y,u(y))}{|x-y|^{\lambda(x,y)}}dy\bigg{)}f(x,u(x))\ \ \text{in}\ \ \mathbb{R}^N, \\
 \ \ \ \ \ \ \ u\in W^{1,p(x)}(\mathbb{R}^N).
 \end{cases}$$

For $M(t)=1$,  problem \eqref{eq1} was investigated in \cite{Biswas}. The authors used a variational methods
to show the existence of solutions. This method works well thanks to a Hardy-Littlewood-Sobolev type inequality that has the following statement.

\begin{thm}\label{Hardy}[\cite{Biswas}]
  Let $0<s<1$ and $\tilde{p}$ is uniformly continuous in $\mathbb{R}^N\times\mathbb{R}^N$.\\ Let $h\in C_{+}(\mathbb{R}^N\times\mathbb{R}^N)$ such that $$\frac{2}{h(x,y)}+\frac{\lambda(x,y)}{N}=2\ \ \text{for all}\ \ (x,y)\in\mathbb{R}^N\times\mathbb{R}^N,$$
 and $r\in C_{+}(\mathbb{R}^N)\cap\mathcal{C}_h$, where $$\mathcal{C}_h:=\{p(x)\leq r(x)\ h_{-}\leq r(x)\ h_{+}<\tilde{p}^{*}_s:=\frac{N\tilde{p}(x,x)}{N-s\tilde{p}(x,x)}\ \text{for all}\ x\in\mathbb{R}^N\}.$$ Then, for $u\in W^{s,p(.),\tilde{p}(.,.)}(\mathbb{R}^N)$, $|u|^{r(.)}\in L^{h_{+}}(\mathbb{R}^N)\cap L^{h_{-}}(\mathbb{R}^N)$ such that,
 $$\int_{\mathbb{R}^N}\int_{\mathbb{R}^N}\frac{|u(x)|^{r(x)}|u(y)|^{r(y)}}{|x-y|^{\lambda(x,y)}}dxdy\leq C\big{(}||u|^{r(.)}|_{h_{+},\mathbb{R}^N}^{2}+||u|^{r(.)}|_{h_{-},\mathbb{R}^N}^{2}\big{)}$$ where $C>0$ is a constant independent of $u$.
\end{thm}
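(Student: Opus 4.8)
The plan is to derive the estimate from the variable--exponent Hardy--Littlewood--Sobolev inequality of Alves and Tavares \cite{Alves}, after first checking that $|u|^{r(.)}$ lies in the Lebesgue spaces in which that inequality is phrased. Their result provides, for measurable functions $f,g$ on $\mathbb{R}^N$ and an exponent $h\in C_+(\mathbb{R}^N\times\mathbb{R}^N)$ subject to a compatibility relation with $\lambda$ --- which in the symmetric ``diagonal'' situation is exactly $\frac{2}{h(x,y)}+\frac{\lambda(x,y)}{N}=2$ --- a bound of the form
$$\int_{\mathbb{R}^N}\int_{\mathbb{R}^N}\frac{|f(x)|\,|g(y)|}{|x-y|^{\lambda(x,y)}}\,dx\,dy\le C\big(\|f\|_{h_+}+\|f\|_{h_-}\big)\big(\|g\|_{h_+}+\|g\|_{h_-}\big),$$
valid whenever $f,g\in L^{h_-}(\mathbb{R}^N)\cap L^{h_+}(\mathbb{R}^N)$. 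Since $h$ is symmetric and satisfies the displayed relation, it then remains only to (i) establish that $|u|^{r(.)}\in L^{h_-}(\mathbb{R}^N)\cap L^{h_+}(\mathbb{R}^N)$ and (ii) apply the inequality with $f=g=|u|^{r(.)}$.

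For step (i), note that $|u|^{r(.)}\in L^{h_\pm}(\mathbb{R}^N)$ is equivalent to $\int_{\mathbb{R}^N}|u(x)|^{r(x)h_\pm}\,dx<\infty$, i.e.\ to $u\in L^{\beta(x)}(\mathbb{R}^N)$ for $\beta=r(.)h_-$ and for $\beta=r(.)h_+$. Here I would invoke the continuous embedding $W^{s,p(.),\tilde{p}(.,.)}(\mathbb{R}^N)\hookrightarrow L^{\beta(x)}(\mathbb{R}^N)$ proved in \cite{Kim}, valid for every $\beta\in C_+(\mathbb{R}^N)$ with $1<\beta(x)<\tilde{p}^{*}_s(x)$ on $\mathbb{R}^N$ (the uniform continuity of $\tilde p$ assumed in the statement is what makes $\tilde{p}^{*}_s$ well behaved and the embedding valid on the whole space). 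The hypothesis $r\in\mathcal{C}_h$ says precisely $p(x)\le r(x)h_-\le r(x)h_+<\tilde{p}^{*}_s(x)$ for all $x$, and since $r(x)h_-\ge p(x)>1$, both choices $\beta=r(.)h_\pm$ fall in the admissible range. Hence $|u|^{r(.)}\in L^{h_-}(\mathbb{R}^N)\cap L^{h_+}(\mathbb{R}^N)$, and moreover the embedding inequalities bound $\|\,|u|^{r(.)}\,\|_{h_\pm}$ by an explicit function of $\|u\|_{W^{s,p(.),\tilde p(.,.)}(\mathbb{R}^N)}$.

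For step (ii), plugging $f=g=|u|^{r(.)}$ into the Alves--Tavares inequality gives
$$\int_{\mathbb{R}^N}\int_{\mathbb{R}^N}\frac{|u(x)|^{r(x)}|u(y)|^{r(y)}}{|x-y|^{\lambda(x,y)}}\,dx\,dy\le C\big(\|\,|u|^{r(.)}\,\|_{h_+}+\|\,|u|^{r(.)}\,\|_{h_-}\big)^2\le 2C\big(\|\,|u|^{r(.)}\,\|_{h_+}^{2}+\|\,|u|^{r(.)}\,\|_{h_-}^{2}\big),$$
where the last inequality is $(a+b)^2\le 2(a^2+b^2)$ and the constant is independent of $u$; this is the asserted estimate.

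I expect the only real difficulty to be hidden inside the variable--exponent Hardy--Littlewood--Sobolev step itself: a fully self--contained argument must reprove \cite{Alves}, and the subtlety there is the absence of a single pair of conjugate exponents --- one splits $\mathbb{R}^N$ according to $\{|f|\ge 1\}$ versus $\{|f|<1\}$ to pass between the $h_-$ and $h_+$ scales, splits the kernel according to $|x-y|\le 1$ versus $|x-y|>1$ to pass between $\lambda_-$ and $\lambda_+$, and on each resulting piece reduces to the classical constant--exponent Hardy--Littlewood--Sobolev inequality, while checking that these decompositions remain compatible with the balance law $\frac{2}{h}+\frac{\lambda}{N}=2$. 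If one simply quotes \cite{Alves}, the remaining work --- the embedding step above --- is routine, the only point to watch being that the left endpoint $r(x)h_-$ is permitted to equal $p(x)$ without leaving the range in which the embedding of \cite{Kim} holds.
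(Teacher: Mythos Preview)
The paper does not give its own proof of this theorem: it is quoted verbatim as a result of Biswas--Tiwari \cite{Biswas} and used as a black box in Lemma~\ref{geo}. Your proposed argument --- the continuous embedding of \cite{Kim} (Theorem~\ref{embedding in frac}) to place $|u|^{r(.)}$ in $L^{h_-}\cap L^{h_+}$, followed by the variable--exponent Hardy--Littlewood--Sobolev inequality of Alves--Tavares \cite{Alves} with $f=g=|u|^{r(.)}$ --- is correct and is exactly the route one expects the original reference to take; there is nothing further to compare.
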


 Throughout this paper, the Kirchhoff function $M$ related to the elliptic part of the problem \eqref{eq1}, satisfies:

\begin{enumerate}
  \item [$(M_1)$] There exists $m_0>0$ such that $M(t)\geq m_0$ for all $t>0$.
  \item [$(M_2)$] There exists $\alpha>0$ such that $M(t)t\leq\alpha\mathcal{M}(t)$ where $\mathcal{M}(t)=\displaystyle\int_{0}^{t}M(s)ds$.
\end{enumerate}
Obviously, a typical prototype for M,  is given by $M(t)=a+bt^m$ with $m,a>0$ and $b\geq0$ for all $t\geq0$. When $M$ is this form, we say that problem \eqref{eq1} is non-degenerate, while it is called degenerate if $a = 0$ and $b > 0$.\\

The functions $f$ and $F$ satisfy:
\begin{enumerate}
  \item [$(F_1)$] There exists a constant $L>0$, $h\in C_{+}(\mathbb{R}^N\times\mathbb{R}^N)$ and a function $r\in C_{+}(\mathbb{R}^N)\cap\mathcal{C}_h$ with $r_{-}>\tilde{p}_{+}$ such that
  $$|f(x,t)|\leq L\ |t|^{r(x)-1}\ \ \ \text{for all}\ \ \ (x,t)\in\mathbb{R}^N\times\mathbb{R}.$$
  \item [$(F_2)$] There exist a constant $\theta>\alpha\tilde{p}_{+}$ such that $0<\theta F(x,t)\leq 2t f(x,t)$ for all $t>0$.
\end{enumerate}

We state the following result.
\begin{thm}\label{thm1}
Assume that $(M_1)-(M_2)$ and $(F_1)-(F_2)$ hold, $p,\ \lambda$ and $F$ are radial symmetric functions and $\tilde{p}$ is uniformly continuous satisfies \eqref{tilderadial}. Then the problem \eqref{eq1} admits a nontrivial radial solution.
\end{thm}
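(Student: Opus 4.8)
\emph{Proof strategy.} Work in the variable-exponent fractional Sobolev space and its closed radial subspace $W:=W^{s,p(.),\tilde{p}(.,.)}_{rad}(\mathbb{R}^N)$, and seek critical points of the energy functional
$$J(u)=\mathcal{M}\big(X(u)\big)-\frac12\int_{\mathbb{R}^N}\int_{\mathbb{R}^N}\frac{F(x,u(x))\,F(y,u(y))}{|x-y|^{\lambda(x,y)}}\,dx\,dy,\qquad \mathcal{M}(t)=\int_0^t M(\tau)\,d\tau .$$
First I would check that $J\in C^1(W,\mathbb{R})$: by $(F_1)$ one has $|F(x,t)|\le \frac{L}{r_-}|t|^{r(x)}$, so the Choquard term is estimated, via the Hardy--Littlewood--Sobolev inequality of Theorem \ref{Hardy} applied with the exponents $h$ and $r$ of $(F_1)$, by a sum of powers of $\big\||u|^{r(\cdot)}\big\|_{h_\pm}$; since $r\in\mathcal{C}_h$, these quantities are controlled by norms of $u$ in the variable-exponent Lebesgue spaces into which $W$ embeds continuously, and differentiability follows from the chain rule together with the continuity of the Riesz bilinear form. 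Its derivative is
$$\langle J'(u),v\rangle=M\big(X(u)\big)\,\langle \mathcal{L}u,v\rangle-\int_{\mathbb{R}^N}\int_{\mathbb{R}^N}\frac{F(x,u(x))\,f(y,u(y))\,v(y)}{|x-y|^{\lambda(x,y)}}\,dx\,dy,$$
where $\mathcal{L}=X'$ coincides with $(-\triangle)^s_{\tilde{p}(.,.)}$ in the weak sense, so weak solutions of \eqref{eq1} are exactly the critical points of $J$ on the full space $W^{s,p(.),\tilde{p}(.,.)}(\mathbb{R}^N)$.

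Next I would verify the mountain-pass geometry. From $(M_1)$, $\mathcal{M}(X(u))\ge m_0 X(u)$, and $X(u)$ is comparable to the modular of the Gagliardo seminorm, hence bounded below by $[u]_{s,\tilde{p}}^{\tilde{p}_+}$ when $[u]_{s,\tilde{p}}\le1$; since $r_->\tilde{p}_+$, the estimate above shows the Choquard term is of strictly higher order near the origin, so $J(u)\ge\alpha_0>0$ on a small sphere $\|u\|=\rho$. For the far end, integrating $(F_2)$ gives $F(x,t)\ge c_1|t|^{\theta/2}-c_2$ for $t>0$, so along a fixed nonnegative $w\in W\setminus\{0\}$ the Choquard term grows at least like $t^{\theta}$, whereas $(M_2)$ yields $\mathcal{M}(\tau)\le \mathcal{M}(1)\tau^{\alpha}$ for $\tau\ge1$ and hence $\mathcal{M}(X(tw))\le C t^{\alpha\tilde{p}_+}$; as $\theta>\alpha\tilde{p}_+$ we get $J(tw)\to-\infty$, so a suitable $e=t_0 w$ with $\|e\|>\rho$ exists. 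The mountain-pass level $c=\inf_{\gamma\in\Gamma}\max_{t\in[0,1]}J(\gamma(t))$ is then positive.

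The core step is the Palais--Smale (Cerami) condition at level $c$. Given such a sequence $(u_n)$, the combination $\theta J(u_n)-\langle J'(u_n),u_n\rangle$ is, by $(M_2)$ and $\tilde{p}_- X(u)\le\langle\mathcal{L}u,u\rangle\le\tilde{p}_+ X(u)$, bounded below by $(\theta-\alpha\tilde{p}_+)\mathcal{M}(X(u_n))\ge(\theta-\alpha\tilde{p}_+)m_0 X(u_n)$ once the Choquard contribution is discarded using $(F_2)$ and the symmetry of $\lambda$; since the left-hand side is $\theta c+o(1)+o(\|u_n\|)$, this bounds $X(u_n)$ and hence $(u_n)$ in $W$. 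By Theorem \ref{Strauss} the embedding $W\hookrightarrow L^{\alpha(x)}(\mathbb{R}^N)$ is compact for every $\alpha$ with $p\ll\alpha\ll\tilde{p}^{*}_s$ — in particular for the exponents $r(\cdot)h_\pm$ produced by Theorem \ref{Hardy} — so, up to a subsequence, $u_n\rightharpoonup u$ in $W$, $u_n\to u$ in the relevant $L^{\alpha(x)}(\mathbb{R}^N)$ and a.e.\ in $\mathbb{R}^N$. A Brezis--Lieb / Vitali argument then gives that the Choquard terms in $\langle J'(u_n),u_n-u\rangle$ vanish in the limit; plugging $v=u_n-u$ into $\langle J'(u_n),u_n-u\rangle\to0$ and using $M(X(u_n))\ge m_0>0$ yields $\langle\mathcal{L}u_n,u_n-u\rangle\to0$, whence the $(S_+)$-property of $(-\triangle)^s_{\tilde{p}(.,.)}$ (see \cite{anouar1,anouar2}) forces $u_n\to u$ in $W$. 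I expect this passage — handling the double nonlocality (the fractional kernel together with the Choquard convolution) in the Brezis--Lieb splitting with variable exponents, and then invoking $(S_+)$ — to be the main technical obstacle.

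Finally, the mountain-pass theorem produces a critical point $u$ of $J|_W$ with $J(u)=c>0$, hence $u\ne0$. Since $p$, $\lambda$, $F$ and $\tilde{p}$ (through \eqref{tilderadial}) are radially symmetric, the orthogonal group $O(N)$ acts on $W^{s,p(.),\tilde{p}(.,.)}(\mathbb{R}^N)$ by isometries leaving $J$ invariant, and $W$ is precisely the corresponding fixed-point subspace; Palais' principle of symmetric criticality then upgrades $u$ to a critical point of $J$ on the whole space, i.e.\ a nontrivial radial weak solution of \eqref{eq1}.
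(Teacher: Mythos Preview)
Your proposal is correct and follows essentially the same route as the paper: $C^1$ energy functional, mountain-pass geometry on the radial subspace, Palais--Smale via the Strauss compact embedding (Theorem \ref{Strauss}), and then the principle of symmetric criticality (the paper uses $SO(N)$ rather than $O(N)$, and simply cites \cite{Biswas} for PS rather than sketching the $(S_+)$ argument, but these are cosmetic). The one step you gloss over that the paper isolates as a separate lemma is the comparison $[u]_{s,\tilde{p}}\ge\tfrac16|u|_{s,p,\tilde{p}}$, which is precisely what is needed to turn your lower bound $X(u)\gtrsim[u]_{s,\tilde{p}}^{\tilde{p}_+}$ into a positive lower bound for $J$ on a small sphere in the \emph{full} norm $\|\cdot\|$.
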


The main tools that we employed for the search of a weak solution of problem \eqref{eq1}, are the Mountain pass theorem due to Ambrosetti and Rabibowitz \cite{Rabino}, Strauss-Lions type for the space $W^{s,p(.),\tilde{p}(.,.)}_{rad}(\mathbb{R}^N)$ proved in section $3$ and the  principle of symmetric criticality, given in \cite{Palais}. Theorem \ref{thm1} extends the result made in \cite{Alves}, in the sense that we are dealing with fractional version of the Sobolev space with variable exponents.\\

This paper is organized as follows. In Section $2$, we give some definitions and fundamental properties of the spaces
$L^{p(x)}(\Omega)$ and $W^{s,p(.),\tilde{p}(.,.)}(\Omega)$. In Section $3$,  we prove Theorem \ref{Strauss} and Theorem \ref{Lions}. Finally, in Section $4$, we introduce our abstract framework related to problem \eqref{eq1} and we prove Theorem \ref{thm1}.

\section{Preliminaries}

In this section, we briefly review the definitions and list some basic properties of the
Lebesgue spaces with variable exponent and the fractional Sobolev spaces with variable exponent.

Let $\Omega$ be a Lipschitz domain in $\mathbb{R}^N$ and $p\in C_+(\overline{\Omega})$. We define the variable exponent Lebesgue space
$$L^{p(x)}(\Omega):=\bigg{\{}u:\Omega\rightarrow\mathbb{R},\ \text{measurable}\ \text{and}\ \int_{\Omega}|u(x)|^{p(x)}dx<\infty\bigg{\}},$$
endowed with the Luxemburg norm $$|u|_{p,\Omega}:=\inf\bigg{\{}\lambda>0:\  \int_{\Omega}\bigg{|}\frac{u(x)}{\lambda}\bigg{|}^{p(x)}dx\leq1 \bigg{\}}.$$
Some basic properties of $L^{p(x)}(\Omega)$ are listed in the following three propositions.

\begin{prp}\label{prp1}[\cite{Diening}, Corollary 3.3.4]
Let $\alpha,\beta\in C_+(\overline{\Omega})$ such that $\alpha(x)\leq\beta(x)$ for all $x\in\overline{\Omega}$. Then, we have
$$|u|_{\alpha,\Omega}\leq2(1+|\Omega|)|u|_{\beta,\Omega}\ \text{for all}\ u\in  L^{\beta(x)}(\Omega).$$
\end{prp}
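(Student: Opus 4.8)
The plan is to deduce the inequality directly from the definition of the Luxemburg norm, by exhibiting an explicit scale at which the $\alpha(\cdot)$-modular of $u$ is at most $1$. One may assume $u\not\equiv 0$, so that $\mu:=|u|_{\beta,\Omega}\in(0,\infty)$. Since $\lambda\mapsto\int_\Omega|u(x)/\lambda|^{\beta(x)}\,dx$ is non-increasing in $\lambda>0$, the characterization of $\mu$ as an infimum gives $\int_\Omega|u(x)/\mu'|^{\beta(x)}\,dx\le 1$ for every $\mu'>\mu$; I would fix a sequence $\mu_n\downarrow\mu$ with this property.

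The single ingredient needed is the elementary pointwise bound
$$|w|^{\alpha(x)}\le|w|^{\beta(x)}+1\qquad\text{for every }w\in\mathbb{R}\text{ and }x\in\overline{\Omega}.$$
Indeed, if $|w|\le 1$ then $|w|^{\alpha(x)}\le 1$ because $\alpha(x)\ge 1$, whereas if $|w|>1$ then $|w|^{\alpha(x)}\le|w|^{\beta(x)}$ because $\alpha(x)\le\beta(x)$; in both cases the bound follows. Applying it with $w=u(x)/\mu_n$ and integrating over $\Omega$ (which has finite measure, $\Omega$ being bounded) gives
$$\int_\Omega\Big|\frac{u(x)}{\mu_n}\Big|^{\alpha(x)}\,dx\le\int_\Omega\Big|\frac{u(x)}{\mu_n}\Big|^{\beta(x)}\,dx+|\Omega|\le 1+|\Omega|.$$

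To finish, set $\Lambda:=2(1+|\Omega|)$ and $\lambda_n:=\Lambda\mu_n$. Since $\Lambda\ge 2\ge 1$ and $\alpha(x)\ge 1$, one has $\Lambda^{-\alpha(x)}\le\Lambda^{-1}$ for every $x$, hence
$$\int_\Omega\Big|\frac{u(x)}{\lambda_n}\Big|^{\alpha(x)}\,dx=\int_\Omega\Lambda^{-\alpha(x)}\Big|\frac{u(x)}{\mu_n}\Big|^{\alpha(x)}\,dx\le\frac{1+|\Omega|}{\Lambda}=\frac12\le 1.$$
By the definition of the Luxemburg norm this forces $|u|_{\alpha,\Omega}\le\lambda_n=2(1+|\Omega|)\,\mu_n$, and letting $n\to\infty$ yields $|u|_{\alpha,\Omega}\le 2(1+|\Omega|)\,|u|_{\beta,\Omega}$.

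There is no real obstacle in this argument: the proof is entirely elementary. The only point that needs a little care is that the comparison $|w|^{\alpha(x)}\le|w|^{\beta(x)}$ is valid only on $\{|w|\ge 1\}$, so the two regimes must be merged into one pointwise estimate at the cost of an additive constant; the boundedness of $\Omega$ is essential here, as it is exactly what keeps that constant (namely $|\Omega|$) finite and produces the factor $1+|\Omega|$, while the extra factor $2$ supplies the slack needed to push the modular from the bound $1+|\Omega|$ down to a value $\le 1$.
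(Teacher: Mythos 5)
Your proof is correct. There is, however, nothing in the paper to compare it with: this proposition is imported from the monograph cited as \cite{Diening} (Corollary 3.3.4) and the paper gives no argument for it. The standard proof behind that citation goes through the generalized H\"older inequality: one writes $\frac{1}{\alpha(x)}=\frac{1}{\beta(x)}+\frac{1}{s(x)}$ and estimates $|u|_{\alpha,\Omega}\le 2\,|u|_{\beta,\Omega}\,|1|_{s,\Omega}$, bounding $|1|_{s,\Omega}$ by a quantity of order $1+|\Omega|$ via the modular of the constant function on a set of finite measure. Your route is more elementary and fully self-contained: the pointwise estimate $|w|^{\alpha(x)}\le 1+|w|^{\beta(x)}$ (valid since $\alpha\le\beta$ and the exponents are at least $1$), the unit-ball property of the Luxemburg norm combined with monotonicity of the modular in $\lambda$ (your approximation $\mu_n\downarrow\mu$ neatly sidesteps the question of whether the modular at $\lambda=|u|_{\beta,\Omega}$ itself is $\le 1$), and the scaling observation $\Lambda^{-\alpha(x)}\le\Lambda^{-1}$ for $\Lambda=2(1+|\Omega|)\ge 1$, which pushes the $\alpha(\cdot)$-modular from $1+|\Omega|$ down to $\tfrac12\le 1$ and hence certifies $\lambda_n=\Lambda\mu_n$ as admissible in the infimum defining $|u|_{\alpha,\Omega}$. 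Every step checks out, and your argument has the merit of making explicit exactly where the finiteness of $|\Omega|$ enters; the only reading caveat is that the statement implicitly requires $\Omega$ to have finite measure (as it does throughout the paper, where $\Omega$ is bounded or a ball), since otherwise the constant $2(1+|\Omega|)$ is vacuous.
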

\begin{prp}\label{prp2}[\cite{Fan, Kova}]
  The space $L^{p(x)}(\Omega)$ is a separable, uniformly convex Banach space and its dual space is $L^{p^{'}(x)}(\Omega)$, where $\frac{1}{p(x)}+\frac{1}{p^{'}(x)}=1$. Furthermore, for any $u\in L^{p(x)}(\Omega)$ and $v\in L^{p^{'}(x)}(\Omega)$, we have $$\bigg{|}\int_{\Omega}uv\ dx\bigg{|}\leq 2|u|_{p,\Omega}|v|_{p^{'},\Omega}.$$
\end{prp}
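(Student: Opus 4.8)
The plan is to verify the three assertions of Proposition~\ref{prp2} by reducing each to classical results on modular function spaces and Orlicz--Musielak theory, since $L^{p(x)}(\Omega)$ is a particular Musielak--Orlicz space generated by the generalized $\Phi$-function $\varphi(x,t)=t^{p(x)}$.

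First I would record the basic structure: because $p\in C_+(\overline\Omega)$ we have $1<p_-(\Omega)\le p_+(\Omega)<\infty$, so $\varphi(x,\cdot)$ satisfies the $\Delta_2$-condition uniformly in $x$ (indeed $\varphi(x,2t)\le 2^{p_+}\varphi(x,t)$) and likewise its conjugate $\varphi^*(x,t)\sim t^{p'(x)}$ satisfies $\Delta_2$; moreover $\varphi(x,\cdot)$ is strictly convex on $[0,\infty)$ since $p(x)>1$. From the uniform $\Delta_2$-bounds one gets the standard norm--modular relations (e.g. $\min(|u|_{p,\Omega}^{p_-},|u|_{p,\Omega}^{p_+})\le\int_\Omega|u|^{p(x)}\,dx\le\max(|u|_{p,\Omega}^{p_-},|u|_{p,\Omega}^{p_+})$), which I will use repeatedly.

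For completeness and separability: completeness follows because $L^{p(x)}(\Omega)$ is a normed modular space whose modular is left-continuous and satisfies $\Delta_2$, hence it is a Banach space (this is the Musielak completeness theorem; alternatively one argues directly that a Cauchy sequence has a modular-Cauchy, hence a.e.-convergent, subsequence and uses Fatou for the modular). Separability follows from the $\Delta_2$-condition together with separability of $\Omega$: simple functions with rational values on a countable generating algebra of measurable sets are dense, because $\Delta_2$ guarantees that modular convergence implies norm convergence and that bounded functions with bounded support are norm-dense. For uniform convexity I would invoke the known criterion for Musielak--Orlicz spaces: if both $\varphi$ and $\varphi^*$ satisfy $\Delta_2$ and $\varphi(x,\cdot)$ is uniformly convex (which here follows from $t^{p(x)}$ being uniformly convex on bounded sets with constants depending only on $p_-,p_+$), then $L^{p(x)}(\Omega)$ is uniformly convex; one cites \cite{Fan, Kova} or \cite{Diening} for the precise statement. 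The duality $ (L^{p(x)}(\Omega))^* = L^{p'(x)}(\Omega)$ with $1/p(x)+1/p'(x)=1$ is again a consequence of the $\Delta_2$-condition on $\varphi$: every bounded linear functional is represented by integration against an element of the associate space, and the associate space of $L^{p(x)}$ is $L^{p'(x)}$ by the Young-inequality computation below.

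Finally, the Hölder-type inequality. For $u\in L^{p(x)}(\Omega)$, $v\in L^{p'(x)}(\Omega)$ with $|u|_{p,\Omega}\le 1$ and $|v|_{p',\Omega}\le 1$, pointwise Young's inequality $ab\le \tfrac{a^{p(x)}}{p(x)}+\tfrac{b^{p'(x)}}{p'(x)}$ gives, after integrating over $\Omega$,
\begin{equation}
\int_\Omega |u v|\,dx \le \int_\Omega \frac{|u|^{p(x)}}{p(x)}\,dx+\int_\Omega\frac{|v|^{p'(x)}}{p'(x)}\,dx\le 1+1 = 2.
\end{equation}
For general nonzero $u,v$ apply this to $u/|u|_{p,\Omega}$ and $v/|v|_{p',\Omega}$ (using that the Luxemburg norm makes the modular of the normalized function at most $1$) and multiply through by $|u|_{p,\Omega}|v|_{p',\Omega}$ to obtain $\left|\int_\Omega uv\,dx\right|\le 2|u|_{p,\Omega}|v|_{p',\Omega}$; the cases $u=0$ or $v=0$ are trivial.

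I expect the only genuine obstacle to be the uniform convexity claim: unlike completeness, separability, and duality (all of which are soft consequences of $\Delta_2$ plus the corresponding facts for the Lebesgue scale), uniform convexity requires a quantitative Clarkson-type estimate for the modular, uniform in the exponent over $[p_-,p_+]$. The cleanest route is simply to cite the Musielak--Orlicz uniform convexity criterion from \cite{Fan} (or \cite{Kova}, \cite{Diening}) rather than to reprove Clarkson's inequalities here, since this proposition is stated as a collection of known facts with those references; accordingly the ``proof'' is really a pointer to the literature for the first two sentences and a short explicit Young-inequality computation for the Hölder bound.
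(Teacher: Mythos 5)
The paper offers no proof of this proposition---it is imported verbatim from \cite{Fan, Kova}---and your treatment matches that stance: you defer the structural claims (completeness, separability, uniform convexity, duality) to the same Musielak--Orlicz literature and verify explicitly only the H\"older-type bound, which you do correctly via pointwise Young's inequality, normalization by the Luxemburg norms, and the fact that the modular of the normalized function is at most $1$. This is exactly the standard argument in the cited references (where one even gets the slightly sharper constant $\frac{1}{p_-}+\frac{1}{(p')_-}\leq 2$), so there is nothing to correct.
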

\begin{prp}\label{prp3}[\cite{Fan}]
  Define the modular $\rho:L^{p(x)}(\Omega)\rightarrow\mathbb{R}$ by $$\rho(u):=\int_{\Omega}|u|^{p(x)}dx,\ \text{for all}\ u\in L^{p(x)}(\Omega).$$
  Then, we have the following relations between norm and modular.\\
  \begin{enumerate}
    \item [(i)] $u\in L^{p(x)}(\Omega)\setminus\{0\}$ if and only if $\rho\big{(}\frac{u}{|u|_{p,\Omega}}\big{)}=1$.
    \item [(ii)] $\rho(u)>1\ (=1,\ <1)$  if and only if $|u|_{p,\Omega}>1,\ (=1,\ <1)$, respectively.
    \item [(iii)] If $|u|_{p,\Omega}>1$, then $|u|_{p,\Omega}^{p_-}\leq\rho(u)\leq |u|_{p,\Omega}^{p_+}$.
    \item [(iv)]  If $|u|_{p,\Omega}<1$, then $|u|_{p,\Omega}^{p_+}\leq\rho(u)\leq |u|_{p,\Omega}^{p_-}$.
    \item [(v)] For a sequence $(u_n)\subset L^{p(x)}(\Omega)$ and $u\in L^{p(x)}(\Omega)$, we have $$\lim_{n\rightarrow+\infty}|u_n-u|_{p,\Omega}=0\ \ \text{if and only if}\ \ \lim_{n\rightarrow+\infty}\rho(u_n-u)=0.$$
  \end{enumerate}
\end{prp}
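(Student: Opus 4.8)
The entire proposition is driven by one auxiliary observation, which I would establish first. Fix $u\in L^{p(x)}(\Omega)\setminus\{0\}$ and set
$$\phi(\lambda):=\rho\Big(\frac{u}{\lambda}\Big)=\int_{\Omega}\frac{|u(x)|^{p(x)}}{\lambda^{p(x)}}\,dx,\qquad\lambda>0.$$
The claim is that $\phi$ is continuous on $(0,\infty)$, strictly decreasing, with $\phi(\lambda)\to+\infty$ as $\lambda\to0^{+}$ and $\phi(\lambda)\to0$ as $\lambda\to+\infty$. Continuity follows from the dominated convergence theorem: on any interval $[\lambda_*/2,2\lambda_*]$ the integrand is bounded by $C_{\lambda_*}|u(x)|^{p(x)}\in L^{1}(\Omega)$. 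Strict monotonicity is immediate since $\lambda\mapsto\lambda^{-p(x)}$ is strictly decreasing at every point $x$ and $\{u\neq0\}$ has positive measure. The two limits come from the elementary bounds $\phi(\lambda)\ge\lambda^{-p_-}\rho(u)$ for $0<\lambda\le1$ and $\phi(\lambda)\le\lambda^{-p_-}\rho(u)$ for $\lambda\ge1$, using that $0<\rho(u)<\infty$ because $u\in L^{p(x)}(\Omega)\setminus\{0\}$. Granting this, the set $\{\lambda>0:\phi(\lambda)\le1\}$ is a closed half-line $[\lambda_0,+\infty)$ with $\phi(\lambda_0)=1$, so by the definition of the Luxemburg norm $|u|_{p,\Omega}=\lambda_0$ and hence $\rho(u/|u|_{p,\Omega})=\phi(\lambda_0)=1$, which is (i); the reverse implication in (i) is trivial since $\rho(u/|u|_{p,\Omega})=1$ forces $u\neq 0$. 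I would dispose of the degenerate case $u=0$ separately at the outset: then $\rho(u)=0$ and $|u|_{p,\Omega}=0$, so (ii)--(v) hold trivially and (i) is vacuous.

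For (ii), if $|u|_{p,\Omega}=1$ then $\rho(u)=\phi(1)=1$ by (i); if $|u|_{p,\Omega}>1$, strict monotonicity gives $\rho(u)=\phi(1)>\phi(|u|_{p,\Omega})=1$, and analogously $|u|_{p,\Omega}<1$ gives $\rho(u)<1$; the converse statements follow by trichotomy. For (iii)--(iv) I would factor
$$\rho(u)=\int_{\Omega}\Big|\frac{u(x)}{|u|_{p,\Omega}}\Big|^{p(x)}|u|_{p,\Omega}^{p(x)}\,dx,$$
note that $\int_{\Omega}|u(x)/|u|_{p,\Omega}|^{p(x)}\,dx=1$ by (i), and bound the scalar factor: if $|u|_{p,\Omega}>1$ then $|u|_{p,\Omega}^{p_-}\le|u|_{p,\Omega}^{p(x)}\le|u|_{p,\Omega}^{p_+}$, whereas if $|u|_{p,\Omega}<1$ the inequalities reverse. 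Inserting these bounds into the integral yields precisely the two-sided estimates (iii) and (iv).

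Finally, (v) is obtained by applying (ii) and (iv) to $u_n-u$. If $|u_n-u|_{p,\Omega}\to0$, then eventually $|u_n-u|_{p,\Omega}<1$ and (iv) gives $\rho(u_n-u)\le|u_n-u|_{p,\Omega}^{p_-}\to0$. Conversely, if $\rho(u_n-u)\to0$, then eventually $\rho(u_n-u)<1$, hence $|u_n-u|_{p,\Omega}<1$ by (ii), and then (iv) gives $|u_n-u|_{p,\Omega}\le\rho(u_n-u)^{1/p_+}\to0$. The only point requiring genuine care is the foundational lemma on $\phi$ — the interchange of limit and integral for continuity and the handling of the trivial case $u=0$; once that is in place, all of (i)--(v) reduce to the monotonicity of $\phi$ together with the elementary inequalities $a^{p_-}\le a^{p(x)}\le a^{p_+}$ for $a\ge1$ and their reversal for $0<a\le1$.
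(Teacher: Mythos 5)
Your proof is correct. Note that the paper itself gives no proof of this proposition: it is quoted verbatim from the cited reference [Fan--Zhao], and your argument via the auxiliary function $\phi(\lambda)=\rho(u/\lambda)$ (continuity by dominated convergence, strict monotonicity, the limits at $0^{+}$ and $+\infty$, then the intermediate value theorem to identify the Luxemburg norm with the unique $\lambda_0$ where $\phi(\lambda_0)=1$) is exactly the standard proof found there, with the degenerate case $u=0$ and the derivation of (ii)--(v) from (i) handled correctly.
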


Next, we recall  the definition and some embedding results on fractional Sobolev spaces with variable exponent that was first introduced in \cite{Kaufmann}.

Let $s\in(0,1)$ and $\tilde{p}\in C(\overline{\Omega}\times\overline{\Omega})$  be such that $\tilde{p}$ is symmetric, i.e., $\tilde{p}(x,y) = \tilde{p}(y,x)$ for all $x,y\in \overline{\Omega}$ and
 $$ 1<\tilde{p}_-(\Omega):=\inf_{(x,y)\in\overline{\Omega}\times\overline{\Omega}}\tilde{p}(x,y)\leq \tilde{p}_+(\Omega):=\sup_{(x,y)\in\overline{\Omega}\times\overline{\Omega}}\tilde{p}(x,y)<+\infty.$$

For $p\in C_+(\overline{\Omega})$, we define
$$W^{s,p(.),\tilde{p}(.,.)}(\Omega)=\bigg{\{}u\in L^{p(x)}(\Omega):\ \int_{\Omega}\int_{\Omega}\frac{|u(x)-u(y)|^{\tilde{p}(x,y)}}{|x-y|^{N+s\tilde{p}(x,y)}}dxdy<\infty\bigg{\}},$$
and for $u\in W^{s,p(.),\tilde{p}(.,.)}(\Omega)$, set $$[u]_{s,\tilde{p}(.,.),\Omega}=\inf\bigg{\{}\lambda>0:\ \rho_{\tilde{p}}\bigg{(}\frac{u}{\lambda}\bigg{)} <1\bigg{\}},$$
where $$\rho_{\tilde{p}}(u)=\int_{\Omega}\int_{\Omega}\frac{|u(x)-u(y)|^{\tilde{p}(x,y)}}{|x-y|^{N+s\tilde{p}(x,y)}}dxdy.$$
Then, $W^{s,p(.),\tilde{p}(.,.)}(\Omega)$  endowed with the norm $$\|u\|_{s,p,\tilde{p},\Omega}:=|u|_{p,\Omega}+[u]_{s,\tilde{p}(.,.),\Omega},$$
is a separable and reflexive Banach space (see \cite{anouar1, anouar2, Kaufmann}).
 We may define another norm of $W^{s,p(.),\tilde{p}(.,.)}(\Omega)$ that is $$|u|_{s,p,\tilde{p},\Omega}:=\inf\bigg{\{}\lambda>0:\ \tilde{\rho}\bigg{(}\frac{u}{\lambda}\bigg{)}<1\bigg{\}},$$ where $$\tilde{\rho}(u)=\int_{\Omega}|u|^{p(x)}dx+\int_{\Omega}\int_{\Omega}\frac{|u(x)-u(y)|^{\tilde{p}(x,y)}}{|x-y|^{N+s\tilde{p}(x,y)}}dxdy.$$ It is easy to see that $|.|_{s,p,\tilde{p},\Omega}$ is an equivalent norm of  $\|.\|_{s,p,\tilde{p},\Omega}$ and more precisely
\begin{equation}\label{15}
 \frac{1}{2} \|.\|_{s,p,\tilde{p},\Omega}\leq|.|_{s,p,\tilde{p},\Omega}\leq 2\|.\|_{s,p,\tilde{p},\Omega}.
\end{equation}
We state below some relations between the norm $|.|_{s,p,\tilde{p},\Omega}$ and the modular $\tilde{\rho}$.

\begin{prp}\label{prp5}(\cite{Kim})
  On $W^{s,p(.),\tilde{p}(.,.)}(\Omega)$  it holds that
  \begin{enumerate}
    \item [(i)] $u\in W^{s,p(.),\tilde{p}(.,.)}(\Omega)\setminus\{0\}$ if and only if $\tilde{\rho}\big{(}\frac{u}{|u|_{s,p,\tilde{p},\Omega}}\big{)}=1$.
    \item  [(ii)] $\tilde{\rho}(u)>1$ $(=1,\ <1)$  if and only if $|u|_{s,p,\tilde{p},\Omega}>1$  $(=1,\ <1)$,  respectively.
  \end{enumerate}
  Moreover,

  \begin{enumerate}
    \item  [(iii)] If $|u|_{s,p,\tilde{p},\Omega}\geq1$, then $|u|_{s,p,\tilde{p},\Omega}^{\tilde{p}_-(\Omega)}\leq \tilde{\rho}(u)\leq |u|_{s,p,\tilde{p},\Omega}^{\tilde{p}_+(\Omega)}$.
    \item  [(iv)] If $|u|_{s,p,\tilde{p},\Omega}<1$, then $|u|_{s,p,\tilde{p},\Omega}^{\tilde{p}_+(\Omega)}\leq \tilde{\rho}(u)\leq |u|_{s,p,\tilde{p},\Omega}^{\tilde{p}_-(\Omega)}$.
  \end{enumerate}
\end{prp}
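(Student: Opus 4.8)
\emph{Proof plan for Proposition \ref{prp5}.} This is a modular--norm correspondence of the same type as Proposition \ref{prp3}, now for the combined modular $\tilde\rho$ on $W^{s,p(.),\tilde p(.,.)}(\Omega)$, so the plan is to run the classical scalar argument. Fix $u\in W^{s,p(.),\tilde p(.,.)}(\Omega)\setminus\{0\}$ and set $\psi_u(\lambda):=\tilde\rho(u/\lambda)$ for $\lambda>0$. First I would record the elementary scaling inequalities: writing $q_-,q_+$ for the smallest and largest exponents occurring in the two integrals defining $\tilde\rho$ (so $\tilde p_-(\Omega)\le q_-\le q_+$; under the relation between $p$ and $\tilde p$ used in this paper one takes $q_-=\tilde p_-(\Omega)$ and $q_+=\tilde p_+(\Omega)$), pulling $t^{p(x)}$ and $t^{\tilde p(x,y)}$ out of the integrands gives, for $t\ge 1$,
$$t^{\,q_-}\,\tilde\rho(u)\ \le\ \tilde\rho(tu)\ \le\ t^{\,q_+}\,\tilde\rho(u),$$
and for $0<t\le 1$ the same with the exponents interchanged. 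In particular $\psi_u(\lambda)<\infty$ for all $\lambda>0$, since $\tilde\rho(u)<\infty$. Because $|u|_{p,\Omega}>0$, the set $\{u\ne 0\}$ has positive measure, so at a.e. point the integrands of $\psi_u$ are strictly decreasing in $\lambda$; hence $\psi_u$ is strictly decreasing on $(0,\infty)$. Continuity of $\psi_u$ follows from dominated convergence applied to each integral (near a point $\lambda_0$ the integrands are dominated by their value at $\lambda_0/2$, integrable by the finiteness just noted). Finally $\psi_u(\lambda)\to 0$ as $\lambda\to\infty$ (dominated convergence, dominating by the integrand at $\lambda=1$) and $\psi_u(\lambda)\to+\infty$ as $\lambda\to 0^+$ (monotone convergence, using again that $\{u\ne0\}$ has positive measure). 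Thus $\psi_u:(0,\infty)\to(0,\infty)$ is a continuous strictly decreasing bijection.

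\textbf{Items (i) and (ii).} By the above there is a unique $\lambda_0>0$ with $\psi_u(\lambda_0)=1$, and $\psi_u<1$ precisely on $(\lambda_0,\infty)$. Since $|u|_{s,p,\tilde p,\Omega}=\inf\{\lambda>0:\psi_u(\lambda)<1\}$, this forces $|u|_{s,p,\tilde p,\Omega}=\lambda_0$, whence $\tilde\rho\big(u/|u|_{s,p,\tilde p,\Omega}\big)=\psi_u(\lambda_0)=1$; the reverse implication in (i) is trivial, as the quotient $u/|u|_{s,p,\tilde p,\Omega}$ already presupposes $u\ne0$. For (ii) one compares $\psi_u(1)=\tilde\rho(u)$ with $\psi_u(\lambda_0)=1$: strict monotonicity yields $\tilde\rho(u)>1\iff\lambda_0>1$, $\tilde\rho(u)=1\iff\lambda_0=1$, and $\tilde\rho(u)<1\iff\lambda_0<1$, while the case $u=0$ is immediate.

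\textbf{Items (iii) and (iv).} Write $|u|:=|u|_{s,p,\tilde p,\Omega}$ and decompose $u=|u|\,v$ with $v=u/|u|$, so $\tilde\rho(v)=1$ by (i). If $|u|\ge1$, applying the $t\ge1$ scaling bound above with $t=|u|$ to $v$ gives $|u|^{\tilde p_-(\Omega)}\le\tilde\rho(u)\le|u|^{\tilde p_+(\Omega)}$, which is (iii). If $|u|<1$, the $0<t\le1$ scaling bound with $t=|u|$ gives instead $|u|^{\tilde p_+(\Omega)}\le\tilde\rho(u)\le|u|^{\tilde p_-(\Omega)}$, which is (iv).

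\textbf{Expected difficulty.} No step is deep; the one requiring genuine care is the first, namely justifying the continuity of $\psi_u$ and, above all, the divergence $\psi_u(\lambda)\to+\infty$ as $\lambda\to0^+$ for the \emph{nonlocal} double-integral term, where one must use $u\not\equiv0$ to ensure the relevant integrand does not vanish almost everywhere. A secondary bookkeeping point is the identification of the scaling exponents with $\tilde p_\pm(\Omega)$, which rests on the hypotheses relating $p$ and $\tilde p$.
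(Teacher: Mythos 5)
The paper itself offers no proof of Proposition \ref{prp5}; it is quoted from \cite{Kim}. Your strategy --- studying $\psi_u(\lambda)=\tilde\rho(u/\lambda)$ (finite for every $\lambda>0$ by scaling, continuous, strictly decreasing, with limits $+\infty$ and $0$ at the ends), identifying $|u|_{s,p,\tilde p,\Omega}$ with the unique root of $\psi_u=1$, and then rescaling $u=|u|\,v$ with $\tilde\rho(v)=1$ --- is the standard Luxemburg-norm argument, and it does establish items (i) and (ii) correctly, including with the strict inequality ``$<1$'' used in this paper's definition of the norm.

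The gap is in the exponent bookkeeping for (iii)--(iv), and it is not merely cosmetic. The modular $\tilde\rho$ involves the exponents $p(x)$ as well as $\tilde p(x,y)$, so your scaling inequalities hold with $q_-=\min(p_-(\Omega),\tilde p_-(\Omega))$ and $q_+=\max(p_+(\Omega),\tilde p_+(\Omega))$. The relation imposed throughout this paper is $\tilde p(x,y)\le p(x)$, which yields $q_-=\tilde p_-(\Omega)$ but $q_+=p_+(\Omega)\ge\tilde p_+(\Omega)$ --- the opposite of your assertion that one may take $q_+=\tilde p_+(\Omega)$. Hence your argument proves, for $|u|_{s,p,\tilde p,\Omega}\ge1$, only $|u|_{s,p,\tilde p,\Omega}^{\tilde p_-(\Omega)}\le\tilde\rho(u)\le|u|_{s,p,\tilde p,\Omega}^{p_+(\Omega)}$, and the reversed inequalities for $|u|_{s,p,\tilde p,\Omega}<1$; the $\tilde p_+(\Omega)$-bounds in (iii) and (iv) are not reached, and no argument can reach them in general: with constant exponents $\tilde p\equiv 2$, $p\equiv 4$, take $v$ with $\tilde\rho(v)=1$ and $\int_\Omega|v|^4\,dx>0$, and $u=tv$ with $t>1$; homogeneity gives $|u|_{s,p,\tilde p,\Omega}=t$, while $\tilde\rho(u)=t^4\int_\Omega|v|^4\,dx+t^2\rho_{\tilde p}(v)>t^2=|u|_{s,p,\tilde p,\Omega}^{\tilde p_+}$, so the upper bound in (iii) fails as stated. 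In other words, the statement as transcribed here is stronger than what the scaling argument gives, and the safe version of (iii)--(iv) carries $\max(p_+(\Omega),\tilde p_+(\Omega))$ (i.e.\ $p_+(\Omega)$ under $\tilde p\le p$) in place of $\tilde p_+(\Omega)$. It is worth noting that this weaker, correct version still suffices where the proposition is invoked later: in \eqref{sa1} one may replace $\tilde p_+$ by $p_+$, since the Claim in the proof of Theorem \ref{Lions} actually shows $\alpha_-(B)\ge p_+(B)+\frac d2$.
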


\begin{thm}\label{embedding in frac}(\cite{Kim})
 Assume that $$s\tilde{p}_+(\Omega)<N\ \ \text{and}\ \ p(x)\geq\tilde{p}(x,x),\ \text{for all}\ x\in\overline{\Omega}.$$
Then, it holds that
\begin{enumerate}
  \item the embedding $W^{s,p(.),\tilde{p}(.,.)}(\mathbb{R}^N)\hookrightarrow L^{r(x)}(\mathbb{R}^N)$ is continuous for any $r\in C_+(\mathbb{R}^N)$ satisfying $p(x)\leq r(x)\ll\tilde{p}^{*}_s:=\frac{N\tilde{p}(x,x)}{N-s\tilde{p}(x,x)}$ for all $x\in \mathbb{R}^N$.
  \item if $\Omega$ is bounded, then the embedding $W^{s,p(.),\tilde{p}(.,.)}(\Omega)\hookrightarrow\hookrightarrow L^{r(x)}(\Omega)$ is compact  for any $r\in C_+(\Omega)$ satisfying $r(x)<\tilde{p}^{*}_s$ for all $x\in \overline{\Omega}$.
\end{enumerate}
\end{thm}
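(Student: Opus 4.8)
The plan is to reduce both statements to the constant-exponent fractional Sobolev spaces on small balls, where the classical fractional Sobolev and Rellich--Kondrachov theorems apply, and then to recombine the local information with the help of Proposition~\ref{prp1} and the norm--modular relations of Propositions~\ref{prp3} and~\ref{prp5}.

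The local reduction rests on a Young-type inequality. Fix $x_0$, a small $\delta>0$, and put $B=B(x_0,\delta)$, $\tilde p_-^B=\inf_{\overline B\times\overline B}\tilde p$, $\tilde p_+^B=\sup_{\overline B\times\overline B}\tilde p$. For constants $q\in(1,\tilde p_-^B)$ and $s_1\in(0,s)$, writing $A(x,y)=\frac{|u(x)-u(y)|^{\tilde p(x,y)}}{|x-y|^{N+s\tilde p(x,y)}}$, one has the identity
\[
\frac{|u(x)-u(y)|^{q}}{|x-y|^{N+s_1q}}
= A(x,y)^{q/\tilde p(x,y)}\,|x-y|^{\beta(x,y)},\qquad
\beta(x,y)=\Big(\tfrac{q}{\tilde p(x,y)}-1\Big)N+q(s-s_1),
\]
and Young's inequality with conjugate exponents $\tilde p(x,y)/q$ and $\tilde p(x,y)/(\tilde p(x,y)-q)$ then yields
\[
\frac{|u(x)-u(y)|^{q}}{|x-y|^{N+s_1q}}\le A(x,y)+C\,|x-y|^{-N+\gamma},
\qquad \gamma=\frac{q(s-s_1)\,\tilde p_+^B}{\tilde p_+^B-q}>0 .
\]
Since $\gamma>0$ the last kernel is integrable over $B\times B$, so $\int_B\int_B\frac{|u(x)-u(y)|^{q}}{|x-y|^{N+s_1q}}\,dx\,dy\le \rho_{\tilde p,B}(u)+C(B)<\infty$; together with $L^{p(\cdot)}(B)\hookrightarrow L^{q}(B)$ (Proposition~\ref{prp1}, using $q<\tilde p_-^B\le p_-(\overline B)$) this shows $u|_B\in W^{s_1,q}(B)$, and that $(u_n|_B)$ is bounded in $W^{s_1,q}(B)$ whenever $(u_n)$ is bounded in $W^{s,p(.),\tilde p(.,.)}(B)$ (via the modular estimates of Proposition~\ref{prp5}). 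Finally the constants can be matched to $r$: since $\tilde p$, $r$, and $\tilde p^{*}_s$ are continuous and $r(x_0)<\tilde p^{*}_s(x_0)$, by shrinking $\delta$ and letting $q\uparrow\tilde p(x_0,x_0)$, $s_1\uparrow s$ one arranges $s_1q<N$ and $\sup_{\overline B}r< q^{*}_{s_1}:=\frac{Nq}{N-s_1q}$, as $q^{*}_{s_1}\uparrow\tilde p^{*}_s(x_0)$ in this limit.

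For the compact embedding~(2), with $\Omega$ bounded and $r(x)<\tilde p^{*}_s(x)$ on $\overline\Omega$, I would cover the compact set $\overline\Omega$ by finitely many balls $B_1,\dots,B_m$ as above, with associated constants $q_j<\tilde p_-^{B_j}$, $s_{1,j}<s$, and set $\Omega_j=\Omega\cap B_j$. For a bounded sequence $(u_n)\subset W^{s,p(.),\tilde p(.,.)}(\Omega)$, the local reduction makes $(u_n|_{\Omega_j})$ bounded in the constant-exponent space $W^{s_{1,j},q_j}(\Omega_j)$ and in $L^{q_j}(\Omega_j)$. The classical compact embedding $W^{s_{1,j},q_j}(\Omega_j)\hookrightarrow\hookrightarrow L^{\sigma}(\Omega_j)$, valid for every $\sigma<q^{*}_{s_{1,j}}$, gives along a subsequence convergence in some $L^{\sigma_j}(\Omega_j)$ with $\sup_{\overline{\Omega_j}}r\le\sigma_j<q^{*}_{s_{1,j}}$, hence convergence in $L^{r(x)}(\Omega_j)$ by Proposition~\ref{prp1}. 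A finite diagonal extraction over $j=1,\dots,m$ yields one subsequence with $\int_{\Omega_j}|u_n-u|^{r(x)}\,dx\to0$ for every $j$; summing these finitely many modulars and applying Proposition~\ref{prp3}(v) gives $u_n\to u$ in $L^{r(x)}(\Omega)$.

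For the continuous embedding~(1), with $\Omega=\mathbb R^N$ and $p\le r\ll\tilde p^{*}_s$, the same localization must be carried out along a countable cover of $\mathbb R^N$ by balls $\{B_i\}$ of uniformly bounded radius and bounded overlap, chosen (using, on $\mathbb R^N$, a uniform modulus of continuity of the exponents) so that the local embedding $W^{s_1,q}(B_i)\hookrightarrow L^{q^{*}_{s_1}}(B_i)$ holds with constants independent of $i$; then $t^{r(x)}\le t^{p(x)}+t^{q^{*}_{s_1}}$ (valid since $p(x)\le r(x)\le q^{*}_{s_1}$) reduces the claim to controlling $\sum_i\big(\int_{B_i}|u|^{p(x)}+\int_{B_i}|u|^{q^{*}_{s_1}}\big)$. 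This summation is the main difficulty, and I expect it to be the crux: the estimate for $\int_{B_i}|u|^{q^{*}_{s_1}}$ obtained by passing through the constant-exponent Sobolev inequality carries a $u$-independent additive term, so a naive sum over infinitely many $B_i$ diverges. The remedy is to replace that step by a genuinely homogeneous local bound --- a variable-exponent fractional Sobolev--Poincar\'e inequality on small balls, whose matching homogeneity is exactly what forces the hypothesis $r\ge p$ (it makes $r_-^{B}\ge\tilde p_-^{B}$) --- and only then to sum; equivalently, one normalizes $\|u\|_{s,p,\tilde p,\mathbb R^N}\le1$ (so $\int_{\mathbb R^N}|u|^{p(x)}\le1$ and $\rho_{\tilde p,\mathbb R^N}(u)\le1$), uses that with bounded overlap $\sum_i\big(\int_{B_i}|u|^{p(x)}+\rho_{\tilde p,B_i}(u)\big)<\infty$ so all but finitely many balls carry arbitrarily small energy, applies the homogeneous bound there, and handles the finitely many remaining balls directly (letting the radii shrink at infinity is a further option). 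Once every $u\in W^{s,p(.),\tilde p(.,.)}(\mathbb R^N)$ is shown to lie in $L^{r(x)}(\mathbb R^N)$, boundedness of the inclusion is automatic from the closed graph theorem, as $L^{r(x)}$-convergence forces a.e.\ convergent subsequences. In short, part~(2) is soft, using only finiteness of the cover; the genuinely delicate point is the uniform, homogeneous local estimate needed to make the covering argument of~(1) converge.
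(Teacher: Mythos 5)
The paper itself offers no proof of this theorem: it is quoted from Ho--Kim \cite{Kim} (cf.\ also \cite{Kaufmann} for the bounded-domain case), so there is no in-paper argument to compare against; your localization strategy is in any case the same freeze-the-exponent reduction used in that literature. Your local step is correct: the pointwise identity, Young's inequality and the integrability of $|x-y|^{-N+\gamma}$ on $B\times B$ do give $W^{s,p(.),\tilde p(.,.)}(B)\subset W^{s_1,q}(B)$ for constants $q<\tilde p_-^B$, $s_1<s$, with $q^{*}_{s_1}$ approaching $\tilde p^{*}_s(x_0)$ as $\delta\to 0$, $q\uparrow\tilde p(x_0,x_0)$, $s_1\uparrow s$. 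Your proof of part (2) is essentially complete and correct: finite cover of $\overline\Omega$, classical fractional Rellich--Kondrachov on each piece, Proposition \ref{prp1} to return to $L^{r(x)}$, a finite diagonal extraction, and Proposition \ref{prp3}(v). The only loose end there is that the classical compact embedding requires $\Omega\cap B_j$ to be an extension domain, which the intersection of a Lipschitz domain with a ball need not automatically be; this is repaired by choosing the cover suitably and is minor.

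Part (1) is where the genuine gap lies, and you flag it yourself: the additive term $C(B)\sim\int_B\int_B|x-y|^{-N+\gamma}\,dx\,dy$ produced by Young's inequality has fixed size per ball, so nothing you have actually proved survives summation over a countable cover of $\mathbb{R}^N$, and the ``homogeneous local bound'' you invoke to repair this is precisely the nontrivial content of the whole-space embedding --- it is asserted, not established. What is needed is a uniform multiplicative local estimate: for $\|u\|_{s,p,\tilde p,\mathbb{R}^N}\le 1$, setting $a_i=\int_{B_i}|u|^{p(x)}dx+\int_{B_i}\int_{B_i}\frac{|u(x)-u(y)|^{\tilde p(x,y)}}{|x-y|^{N+s\tilde p(x,y)}}dxdy\le 1$, one must show $\int_{B_i}|u|^{r(x)}dx\le C\,a_i^{t}$ with $t\ge 1$ and $C$ independent of $i$; bounded overlap then gives $\sum_i a_i^{t}\le\sum_i a_i\le 2^N\tilde\rho(u)\le C'$, so the modular, hence the norm, of $u$ in $L^{r(x)}(\mathbb{R}^N)$ is controlled. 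Such an estimate can be obtained by replacing Young with H\"older (exponents $\tilde p(x,y)/q$ and its conjugate) in your reduction, so that the local $W^{s_1,q}$-seminorm is bounded by a positive power of the local $\tilde p$-modular alone, with no additive constant, combined with the homogeneous constant-exponent Sobolev inequality on congruent balls and the splitting $|u|^{r(x)}\le|u|^{p(x)}+|u|^{q^{*}_{s_1}}$ (this is exactly where the hypothesis $p\le r$ enters); the exponent $t\ge1$ comes out because $q^{*}_{s_1}>\tilde p_+^{B_i}$ once the balls are small and the oscillation of the exponents is controlled by uniform continuity. The closed-graph remark does not substitute for this: the quantitative bound is needed already to know that $u\in L^{r(x)}(\mathbb{R}^N)$ at all. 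So part (2) of your proposal stands, while part (1) remains an outline whose central estimate is missing.
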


\section{Proof of Theorem \ref{Lions} and Theorem \ref{Strauss}}

We start by recalling the two following lemmas that will be useful in the sequels.

\begin{lemma}\label{lem1}(\cite{Fan1})
  Let $\Omega$ be an open domain in $\mathbb{R}^N$, $q,\alpha,\beta\in C_+(\mathbb{R}^N)$ such that
   \begin{equation}\label{2}
    q\ll\alpha\ll\beta,\ \ \text{in}\ \Omega.
  \end{equation}
  If $u\in  L^{\beta(x)}(\Omega)$, then $u\in L^{\alpha(x)}(\Omega)$ and
  \begin{equation}\label{5}
  \int_{\Omega}|u|^{\alpha(x)}dx\leq 2\big{|}|u|^{\alpha_1(x)}\big{|}_{m(x)}.\big{|}|u|^{\alpha_2(x)}\big{|}_{m^{'}(x)},
  \end{equation}
   where $$\alpha_1(x)=\frac{q(x)(\beta(x)-\alpha(x))}{\beta(x)-q(x)},\ \ \ \ \alpha_2(x)=\frac{\beta(x)(\alpha(x)-q(x))}{\beta(x)-q(x)}$$
   $$m(x)=\frac{\beta(x)-q(x)}{\beta(x)-\alpha(x)},\ \ \ \ m^{'}(x)=\frac{\beta(x)-q(x)}{\alpha(x)-q(x)}.$$
\end{lemma}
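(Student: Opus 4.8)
The plan is to prove the interpolation inequality \eqref{5} by combining the elementary pointwise identity coming from the definitions of $\alpha_1,\alpha_2,m,m'$ with the H\"older inequality in variable exponent Lebesgue spaces (Proposition \ref{prp2}). First I would verify the pointwise relations: directly from the formulas one checks that $\frac{1}{m(x)}+\frac{1}{m'(x)}=1$ for all $x$, so $m$ and $m'$ are conjugate variable exponents, and that $\alpha_1(x)+\alpha_2(x)=\alpha(x)$ together with $\alpha_1(x)m(x)=q(x)$ and $\alpha_2(x)m'(x)=\beta(x)$. Since \eqref{2} gives $q\ll\alpha\ll\beta$, one has $\beta(x)-\alpha(x)>0$, $\alpha(x)-q(x)>0$ and $\beta(x)-q(x)>0$ uniformly on $\Omega$, so all four functions are well-defined, continuous, and $m,m'\in C_+(\mathbb{R}^N)$ with $m_->1$ and $m'_->1$; this is the step that uses the strict ordering in an essential way and also guarantees all the exponents stay bounded away from the degenerate values.

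Next I would write $|u(x)|^{\alpha(x)}=|u(x)|^{\alpha_1(x)}\cdot|u(x)|^{\alpha_2(x)}$ and apply the variable-exponent H\"older inequality from Proposition \ref{prp2} to the pair of functions $|u|^{\alpha_1(\cdot)}$ and $|u|^{\alpha_2(\cdot)}$ with the conjugate exponents $m(\cdot)$ and $m'(\cdot)$, obtaining
\[
\int_{\Omega}|u|^{\alpha(x)}\,dx\leq 2\big||u|^{\alpha_1(x)}\big|_{m(x)}\cdot\big||u|^{\alpha_2(x)}\big|_{m'(x)}.
\]
To conclude that the right-hand side is finite (so that indeed $u\in L^{\alpha(x)}(\Omega)$), I would observe that $\big||u|^{\alpha_1(x)}\big|_{m(x)}$ is finite because $\int_{\Omega}\big(|u|^{\alpha_1(x)}\big)^{m(x)}dx=\int_{\Omega}|u|^{q(x)}dx<\infty$ (using $u\in L^{\beta(x)}(\Omega)\subset L^{q(x)}(\Omega)$, which follows from Proposition \ref{prp1} since $q\ll\beta$ forces $q(x)\le\beta(x)$ on the relevant set, possibly after noting $\Omega$ may be unbounded so one restricts attention to where $u$ is supported or uses the embedding hypothesis), and similarly $\int_{\Omega}\big(|u|^{\alpha_2(x)}\big)^{m'(x)}dx=\int_{\Omega}|u|^{\beta(x)}dx<\infty$; then apply Proposition \ref{prp3}(i)--(ii) to pass from finiteness of the modular to finiteness of the norm.

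I expect the only real subtlety to be bookkeeping: confirming that the algebraic identities $\alpha_1 m=q$, $\alpha_2 m'=\beta$, $\alpha_1+\alpha_2=\alpha$, $\frac1m+\frac1{m'}=1$ hold identically — these are routine but must be done carefully — and making sure each exponent function lies in $C_+$ so that Proposition \ref{prp2} genuinely applies, which is exactly where the hypothesis $q\ll\alpha\ll\beta$ (strict inequalities with a uniform gap) is consumed. There is no hard analytic content beyond H\"older; the lemma is essentially the variable-exponent version of the classical interpolation $\|u\|_{\alpha}\le\|u\|_q^{1-\theta}\|u\|_\beta^{\theta}$, written in modular form.
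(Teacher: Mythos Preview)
The paper does not supply its own proof of this lemma; it is quoted verbatim from \cite{Fan1} and used as a black box. Your argument is exactly the standard proof (and the one given in \cite{Fan1}): check the algebraic identities $\alpha_1+\alpha_2=\alpha$, $\alpha_1 m=q$, $\alpha_2 m'=\beta$, $\tfrac{1}{m}+\tfrac{1}{m'}=1$, and then apply the variable-exponent H\"older inequality (Proposition \ref{prp2}) to the factorisation $|u|^{\alpha(x)}=|u|^{\alpha_1(x)}|u|^{\alpha_2(x)}$. Your bookkeeping is correct, and the use of $q\ll\alpha\ll\beta$ to keep $m,m'$ bounded away from $1$ is precisely the point.

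One small remark: the parenthetical about $u\in L^{\beta(x)}(\Omega)\subset L^{q(x)}(\Omega)$ via Proposition \ref{prp1} is not quite right on an unbounded $\Omega$, since that proposition carries the factor $(1+|\Omega|)$. The inequality \eqref{5} itself is valid regardless (both sides may be $+\infty$), and in every place the paper invokes the lemma the sequence $(u_n)$ is already known to lie in $L^{q(x)}$ from the Sobolev embedding, so the finiteness of the right-hand side is never an issue in the applications. You could simply drop that justification and state \eqref{5} as a formal H\"older inequality, noting that the conclusion $u\in L^{\alpha(x)}(\Omega)$ follows whenever the right-hand side is finite.
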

\begin{lemma}\label{lem2}(\cite{Fan1})
  Let $p,\tilde{p},q,r,u_n$ be given in Theorem \ref{Lions}. If $\alpha\in C_+(\mathbb{R}^N)$ satisfies
  \begin{equation}\label{1}
    q\ll\alpha\ll\tilde{p}^{*}_s,\ \ \text{in}\ \mathbb{R}^N.
  \end{equation}
  Then, $$\sup_{y\in\mathbb{R}^N}\int_{B(y,r)}|u_n|^{\alpha(x)}dx\rightarrow0,\ \ n\rightarrow+\infty.$$
\end{lemma}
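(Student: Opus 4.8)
The plan is to interpolate the exponent $\alpha$ between $q$ and an auxiliary exponent $\beta$ that still lies strictly below $\tilde p^{*}_s$, apply the variable-exponent Hölder inequality of Lemma \ref{lem1} on each ball $B(y,r)$, and then bound the two resulting factors uniformly in $y$: one of them by the vanishing hypothesis \eqref{6}, the other by the Sobolev embedding of Theorem \ref{embedding in frac}. Concretely, since $\alpha\ll\tilde p^{*}_s$ in $\mathbb R^N$ the constant $c_0:=\tfrac12\inf_{x\in\mathbb R^N}\big(\tilde p^{*}_s(x)-\alpha(x)\big)$ is positive; put $\beta:=\alpha+c_0\in C_+(\mathbb R^N)$. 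Then $\alpha\ll\beta\ll\tilde p^{*}_s$, and combining with the hypotheses $p\ll q\ll\tilde p^{*}_s$ of Theorem \ref{Lions} and $q\ll\alpha$ of Lemma \ref{lem2} we get the chain $q\ll\alpha\ll\beta$ together with $p\ll\beta$ in $\mathbb R^N$. In particular $p(x)\le\beta(x)\ll\tilde p^{*}_s$, so by Theorem \ref{embedding in frac}(1) the embedding $W^{s,p(.),\tilde p(.,.)}(\mathbb R^N)\hookrightarrow L^{\beta(x)}(\mathbb R^N)$ is continuous; since $(u_n)$ is bounded in $W^{s,p(.),\tilde p(.,.)}(\mathbb R^N)$, Proposition \ref{prp3} yields a constant $C_\beta>0$ with $\int_{\mathbb R^N}|u_n|^{\beta(x)}\,dx\le C_\beta$ for all $n$.

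Next, fix $y\in\mathbb R^N$ and apply Lemma \ref{lem1} on $\Omega=B(y,r)$ with the triple $q\ll\alpha\ll\beta$; writing $\alpha_1,\alpha_2,m,m'$ for the associated functions, \eqref{5} gives
$$\int_{B(y,r)}|u_n|^{\alpha(x)}\,dx\le 2\,\big||u_n|^{\alpha_1(x)}\big|_{m(x),B(y,r)}\,\big||u_n|^{\alpha_2(x)}\big|_{m'(x),B(y,r)} .$$
The decisive point is the two algebraic identities $\alpha_1(x)m(x)=q(x)$ and $\alpha_2(x)m'(x)=\beta(x)$, so that the modular of $|u_n|^{\alpha_1(x)}$ in $L^{m(x)}(B(y,r))$ equals $\int_{B(y,r)}|u_n|^{q(x)}\,dx$, and the modular of $|u_n|^{\alpha_2(x)}$ in $L^{m'(x)}(B(y,r))$ equals $\int_{B(y,r)}|u_n|^{\beta(x)}\,dx$. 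Moreover $m,m'\in C_+(\mathbb R^N)$ with $1<m_-\le m_+<\infty$ and $1<m'_-\le m'_+<\infty$, because $\beta-\alpha$ and $\alpha-q$ are bounded below by positive constants while $\beta-q$ is bounded above.

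Now set $\delta_n:=\sup_{y\in\mathbb R^N}\int_{B(y,r)}|u_n|^{q(x)}\,dx$, which tends to $0$ by \eqref{6}. For $n$ large enough that $\delta_n<1$, the modular of $|u_n|^{\alpha_1(x)}$ over $B(y,r)$ is $\le\delta_n<1$ for every $y$, so by Proposition \ref{prp3}(ii),(iv) (applied on $B(y,r)$ with exponent $m(x)$, and using $\sup_{B(y,r)}m\le m_+$ together with $\delta_n<1$) one gets $\big||u_n|^{\alpha_1(x)}\big|_{m(x),B(y,r)}\le\delta_n^{1/m_+}$. For the other factor, the modular of $|u_n|^{\alpha_2(x)}$ over $B(y,r)$ is $\le\int_{\mathbb R^N}|u_n|^{\beta(x)}\,dx\le C_\beta$ for every $y$, so Proposition \ref{prp3}(ii),(iii) gives a constant $C:=\max\{1,C_\beta^{1/m'_-}\}$, independent of $y$ and $n$, with $\big||u_n|^{\alpha_2(x)}\big|_{m'(x),B(y,r)}\le C$. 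Combining the three displays, for $n$ large we obtain $\sup_{y\in\mathbb R^N}\int_{B(y,r)}|u_n|^{\alpha(x)}\,dx\le 2C\,\delta_n^{1/m_+}\to 0$, which is the assertion.

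The only real obstacle — and the place to be careful — is keeping every constant independent of the center $y$. This works precisely because $m$ and $m'$ are globally defined functions of $x$, so their extrema over any $B(y,r)$ are controlled by their extrema over $\mathbb R^N$; because the uniform bound $C_\beta$ on $\int|u_n|^{\beta(x)}$ comes from the embedding on the whole space rather than on a single ball; and because the small quantity controlling the first factor is exactly the supremum over $y$ appearing in \eqref{6}.
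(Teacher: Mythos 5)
Your proof is correct: the choice $\beta=\alpha+c_0$ with $c_0=\tfrac12\inf(\tilde p^{*}_s-\alpha)$ keeps $\beta$ subcritical, Lemma \ref{lem1} with the identities $\alpha_1m=q$, $\alpha_2m'=\beta$ splits the modular on each ball, and the modular--norm relations of Proposition \ref{prp3} together with the global embedding of Theorem \ref{embedding in frac} make all constants independent of the center $y$, so the supremum indeed tends to zero. The paper itself gives no proof of this lemma (it is quoted from \cite{Fan1}), but your argument is precisely the interpolation scheme the paper deploys in Case 1 of its proof of Theorem \ref{Lions}, so it is in substance the same approach and fills in the missing details cleanly.
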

Now we proceed to the proof of theorem \ref{Lions}.
\begin{proof}[{\bf Proof of Theorem \ref{Lions}}]
 According to Proposition \ref{prp3}, it is sufficient to show that $\displaystyle\int_{\mathbb{R}^N}|u_n(x)|^{\alpha(x)}dx\rightarrow0$  as  $n\rightarrow+\infty$. We deal with it in two cases.\\

   Case 1: Suppose that $\alpha$ satisfies \eqref{1}, i.e,  $$q\ll\alpha\ll\tilde{p}^{*}_s,\ \ \text{in}\ \mathbb{R}^N.$$ Let \begin{equation}\label{3}
                                                                                                                  \beta(x)=\alpha(x)+\epsilon,
                                                                                                                \end{equation}
   where $\epsilon$ is small enough such that $q\ll \beta\ll\tilde{p}^{*}_s$ in $\mathbb{R}^N$.  Then by Theorem \ref{embedding in frac}, for any $y\in\mathbb{R}^N$ and $r>0$, we have the following continuous
embedding $$W^{s,p(.),\tilde{p}(.,.)}(B(y,r))\hookrightarrow L^{\beta(x)}(B(y,r)).$$
Therefore, there exists a constant $c>0$ such that
\begin{equation}\label{4}
  |u|_{\beta(x),B(y,r)}\leq c |u|_{s,p,\tilde{p},B(y,r)},\ \ \text{for all}\ \ u\in W^{s,p(.),\tilde{p}(.,.)}(B(y,r)).
\end{equation}
Lemma \ref{lem1} gives inequality \eqref{5}. Set $$\lambda_n=\lambda_n(y,r)=\big{|}|u_n|^{\alpha_1(x)}\big{|}_{m(x),B(y,r)},\ (\alpha_1,\ m\ \text{are given by Lemma \ref{lem1}}).$$
  Combining conditions $(ii)$ of Proposition \ref{prp3}, \eqref{6} and since $\alpha_1m=q$, we obtain, $\lambda_n<1$, for $n$ sufficiently large. Applying $(iv)$ of Proposition \ref{prp3}, we get for such $n$,
 \begin{equation}\label{7}
   \lambda_n^{m_+(B(y,r))}\leq \int_{B(y,r)}|u_n|^{q(x)}dx.
 \end{equation}

 Set $$\tau_n=\sup_{y\in \mathbb{R}^N}\int_{B(y,r)}|u_n|^{q(x)}dx.$$
 For $n$ sufficiently large, we have
       \begin{equation}\label{8}
        \lambda_n\leq \tau_n^{\frac{1}{m_+(B(y,r))}}\leq\tau_n^{\frac{1}{m_+(\mathbb{R}^N)}}.
      \end{equation}
     By Lemma \ref{lem2}, it yields that $$\sup_{y\in\mathbb{R}^N}\int_{B(y,r)}|u_n|^{\beta(x)}dx\rightarrow0,\ \ n\rightarrow+\infty.$$
      Set $$\mu_n=\mu_n(y,r)=\big{|}|u_n|^{\alpha_2(x)}\big{|}_{m^{'}(x),B(y,r)},\ \ \ \ \ \sigma_n=\sigma_n(y,r)=|u_n|_{\beta(x),B(y,r)}.$$
      Hence, for $n$ sufficiently large, $\mu_n<1$ and $\sigma_n<1$.\\

        Furthermore,
      \begin{equation}\label{9}
        \mu_n^{m^{'}_+(B(y,r))}\leq \int_{B(y,r)}|u_n|^{\beta(x)}dx\leq \sigma_n^{\beta_{-}(B(y,r))}\leq \sigma_n^{\alpha_{-}(B(y,r))}.
      \end{equation}

      Inequalities \eqref{4} and \eqref{9} give that
      \begin{align}\label{11}
        \mu_n& \leq \bigg{(}\int_{B(y,r)}|u_n|^{\beta(x)}dx\bigg{)}^{\frac{1}{m^{'}_+(B(y,r))}}\leq c^{\frac{\alpha_-(B(y,r))}{m^{'}_+(B(y,r))}} |u_n|_{s,p,\tilde{p},B(y,r)}^{\frac{\alpha_-(B(y,r))}{m^{'}_+(B(y,r))}}\nonumber\\
             &\leq c_0 |u_n|_{s,p,\tilde{p},B(y,r)}^{\frac{\alpha_-(B(y,r))}{m^{'}_+(B(y,r))}}.
      \end{align}

      {\bf Claim:}
      We claim that
      $$\frac{\alpha_-(B(y,r))}{\tilde{p}_+(B(y,r))m^{'}_+(B(y,r))}\geq1\ \ \text{for}\ r\ \text{and}\ \epsilon\ \text{in}\ \eqref{3}\ \text{be small enough}.$$

      In fact,  since $p$ is uniformly continuous and $\inf_{x\in\mathbb{R}^N}(\alpha(x)-p(x))=d>0$,
       there exists $\beta>0$, such that for all $x,z\in B(y,r)$, $|x-z|<\beta$, we have $$|p(z)-p(x)|\leq\frac{d}{2}.$$
       Which yields that $$p(z)+\frac{d}{2}\leq p(x)+d\leq\alpha(x),\ \ \text{for all}\ x,z\in B(y,r).$$
      It holds that, for $r$ small enough,  $$\alpha_-(B)\geq p_+(B) +\frac{d}{2}\geq p_-(B) +\frac{d}{2}.$$  Since $\tilde{p}(x,y)\leq p(x)$, then $\tilde{p}_+(B(y,r))\leq p_+(B(y,r))$, and so $$\frac{\alpha_-(B(y,r))}{\tilde{p}_+(B(y,r))}\geq1+\frac{d}{2\tilde{p}_+(B(y,r))}.$$

       On the other hand, $q(x)\ll\alpha(x)$ in $\mathbb{R}^N$, i.e. $\inf_{x\in\mathbb{R}^N}(\alpha(x)-q(x))=d^{'}>0$. \\
       Then, for $\epsilon$ small enough,
      $$m^{'}(x)=1+\frac{\epsilon}{\alpha(x)-q(x)}\leq 1+\frac{\epsilon}{d^{'}}.$$ Hence, for such $r$ and $\epsilon$, $\frac{\alpha_-(B(y,r))}{\tilde{p}_+(B(y,r))m^{'}_+(B(y,r))}\geq1.$ Thus the claim.\\

     By Proposition \ref{prp5}, for all $u\in W^{s,p(.),\tilde{p}(.,.)}(\mathbb{R}^N)$, we have

     \begin{equation}\label{sa1}
       |u|_{s,p,\tilde{p},B(y,r)}^{\frac{\alpha_-(B(y,r))}{m^{'}_+(B(y,r))}}\leq \tilde{\rho}(u)^{\frac{\alpha_-(B(y,r))}{m^{'}_+(B(y,r))\widetilde{p}_+(B(y,r))}}\leq\tilde{\rho}(u)\ \ \text{if}\ |u|_{s,p,\tilde{p},B(y,r)}<1,
     \end{equation}
     and
     \begin{equation}\label{sa2}
       |u|_{s,p,\tilde{p},B(y,r)}^{\frac{\alpha_-(B(y,r))}{m^{'}_+(B(y,r))}}\leq \tilde{\rho}(u)^{\frac{\alpha_-(B(y,r))}{m^{'}_+(B(y,r))\widetilde{p}_-(B(y,r))}}\leq\tilde{\rho}(u)^{\alpha_+(\mathbb{R}^N)}\ \ \text{if}\ |u|_{s,p,\tilde{p},B(y,r)}>1.
     \end{equation}
Combining \eqref{sa1} and \eqref{sa2}, we get
     \begin{align}\label{12}
       |u_n|_{s,p,\tilde{p},B(y,r)}^{\frac{\alpha_-(B(y,r))}{m^{'}_+(B(y,r))}}& \leq \tilde{\rho}(u_n)+\tilde{\rho}(u_n)^{\alpha_+(\mathbb{R}^N)}.
     \end{align}
     By \eqref{5}, \eqref{8}, \eqref{11} and \eqref{12}, it follows immediately that
   \begin{align}\label{13}
     \int_{B(y,r)}|u_n|^{\alpha(x)}dx &\leq 2\tau_n^{\frac{1}{m_+(\mathbb{R}^N)}}\mu_n \nonumber\\
     &\leq2c_0
     \tau_n^{\frac{1}{m_+(\mathbb{R}^N)}}\bigg{(}\tilde{\rho}(u_n)^{\alpha_+(\mathbb{R}^N)}+\tilde{\rho}(u_n)\bigg{)}.
   \end{align}
   Covering $\mathbb{R}^N$ by a sequence of balls $B(y_i,r)$, $\{i=1,2,3,....\}$ in such a way that each point of $\mathbb{R}^N$ is contained in at most $2^N$ balls, from \eqref{13} we obtain
\begin{align}\label{14}
  \int_{\mathbb{R}^N}|u_n|^{\alpha(x)}dx& \leq \sum_{i=1}^{\infty}\int_{B(y_i,r)}|u_n|^{\alpha(x)}dx\nonumber\\
                                  &\leq 2c_0\tau_n^{\frac{1}{m_+(\mathbb{R}^N)}}
                                  \sum_{i=1}^{\infty}\bigg{[}\bigg{(}\int_{B(y_i,r)}|u_n|^{p(x)}dx+
                                  \int_{B(y_i,r)}\int_{B(y_i,r)}
                                  \frac{|u(x)-u(z)|^{\tilde{p}(x,z)}}{|x-z|^{N+s\tilde{p}(x,z)}}dxdz\bigg{)}\nonumber\\
                                  &+\bigg{(}\int_{B(y_i,r)}|u_n|^{p(x)}dx+\int_{B(y_i,r)}\int_{B(y_i,r)}
                                  \frac{|u(x)-u(z)|^{\tilde{p}(x,z)}}{|x-z|^{N+s\tilde{p}(x,z)}}dxdz\bigg{)}^{\alpha_+(\mathbb{R}^N)}\bigg{]}\nonumber\\
                                  &\leq 2c_0\tau_n^{\frac{1}{m_+(\mathbb{R}^N)}}\mathbf{\bigg{[}}\int_{\mathbb{R}^N}|u_n|^{p(x)}\sum_{i=1}^{\infty}\chi_{B(y_i,r)}(x)dx
                                  \nonumber\\
                                  &+
                                  \int_{\mathbb{R}^N}\int_{\mathbb{R}^N}
                                  \frac{|u(x)-u(z)|^{\tilde{p}(x,z)}}{|x-z|^{N+s\tilde{p}(x,z)}}\sum_{i=1}^{\infty}\chi_{B(y_i,r)\times B(y_i,r)}(x,z)dxdz\nonumber\\
                                  &+\sum_{i=1}^{\infty}\bigg{(}\int_{\mathbb{R}^N}|u_n|^{p(x)}\chi_{B(y_i,r)}(x)dx+\int_{\mathbb{R}^N}\int_{\mathbb{R}^N}
                                  \frac{|u(x)-u(z)|^{\tilde{p}(x,z)}}{|x-z|^{N+s\tilde{p}(x,z)}}\chi_{B(y_i,r)\times B(y_i,r)}(x,z)dxdz\bigg{)}^{\alpha_+(\mathbb{R}^N)}\mathbf{\bigg{]}}\nonumber\\
                                  &\leq2c_0\tau_n^{\frac{1}{m_+(\mathbb{R}^N)}}\bigg{(}(2^N)\widetilde{\rho}(u_n)
                                  +(2^N)^{\alpha_+(\mathbb{R}^N)}\widetilde{\rho}(u_n)^{\alpha_+(\mathbb{R}^N)}\bigg{)}.
\end{align}
By virtue of Proposition \ref{prp5} and the boundedness of $(u_n)_n\subset W^{s,p(.),\tilde{p}(.,.)}(\mathbb{R}^N)$, we can assert that $(\widetilde{\rho}(u_n))_n$  is bounded. Then \eqref{6} and \eqref{14} show that $$\int_{\mathbb{R}^N}|u_n|^{\alpha(x)}dx\rightarrow0,\ \ n\rightarrow+\infty.$$ Thus $u_n \rightarrow0$ in $L^{\alpha(x)}(\mathbb{R}^N)$.\\

Case 2: Let $p\ll\alpha\ll \tilde{p}^{*}_s$ without $q\ll\alpha$.\\

Let $\beta\in C_+(\mathbb{R}^{N})$ be measurable function such that $\alpha\ll\beta$ and $q\ll\beta\ll\tilde{p}^{*}_s$.  Applying Case 1 with $\beta$, we get $\displaystyle\int_{\mathbb{R}^N}|u_n|^{\beta(x)}dx\rightarrow0$. Notice that $p\ll\alpha\ll\beta$,  by Lemma \ref{lem1}, it comes that
\begin{align}\label{9o}
  \int_{\mathbb{R}^N}|u_n|^{\alpha(x)}dx & \leq 2\big{|}|u_n|^{\alpha_1(x)}\big{|}_{m(x)}.\big{|}|u_n|^{\alpha_2(x)}\big{|}_{m^{'}(x)}\nonumber\\
                                         &\leq \bigg{(}\int_{\mathbb{R}^N}|u_n|^{p(x)}dx\bigg{)}^{m_-} \bigg{(}\int_{\mathbb{R}^N}|u_n|^{\beta(x)}dx\bigg{)}^{m^{'}_-}.
\end{align}

 Since $\displaystyle\int_{\mathbb{R}^N}|u_n|^{p(x)}dx$ is bounded, by \eqref{9o} we deduce that $$\int_{\mathbb{R}^N}|u_n|^{\alpha(x)}dx\rightarrow0,\ \ n\rightarrow+\infty.$$  Thus the proof of Theorem \ref{Lions} is completed.
\end{proof}

Let's turn to the proof of Theorem \ref{Strauss}.

\begin{proof}[{\bf Proof of Theorem \ref{Strauss}}]
Let $(u_n)_{n\in\mathbb{N}}\subset W^{s,p(.),\tilde{p}(.,.)}_{rad}(\mathbb{R}^N)$ be a bounded sequence. Since $W^{s,p(.),\tilde{p}(.,.)}_{rad}(\mathbb{R}^N)$ is a reflexive space, up to subsequence, still denoted by $u_n$, we have  $$u_n\rightharpoonup0\ \ \ \text{in}\ \ \  W^{s,p(.),\tilde{p}(.,.)}_{rad}(\mathbb{R}^N).$$ Our goal is to prove that, for any $p\ll\alpha\ll \widetilde{p}^{*}_s$, $$|u_n|_{\alpha,\mathbb{R}^N}\rightarrow0\ \ \ \text{as}\ \ \ n\rightarrow+\infty.$$
By Proposition \ref{prp3}, we get that $\big{(}\rho(u_n)\big{)}$ is bounded, that is, for some $c>0$,
               \begin{equation}\label{17}
                 \int_{\mathbb{R}^N}|u_n(x)|^{p(x)}dx<c.
               \end{equation}
Let $r>0$, as, for all $n\in\mathbb{N}$, $u_n$ and $p$ are all radially symmetric, then
\begin{equation}\label{18}
  \int_{B(y_1,r)}|u_n(x)|^{p(x)}dx=\int_{B(y_2,r)}|u_n(x)|^{p(x)}dx\ \ \ \text{for all}\ \ \ y_1,y_2\in\mathbb{R}^N,\ \ |y_1|=|y_2|.
\end{equation}
  In the sequel, we proceed as in the proof of Theorem 3.1 in \cite{Fan}. For each $y\in\mathbb{R}^N$, $|y|>r$ denote by $\gamma(y)$ the maximum of those integers $j\geq1$ such that there exist $y_1,y_2,\ldots,y_j\in\mathbb{R}^N$, with $|y_1|=|y_2|=\ldots=|y_j|=|y|$ and $$B(y_i,r)\cap B(y_k,r)=\varnothing\ \ \  \text{whenever}\ i\neq k.$$ It is easy to see that \begin{equation}\label{16}
 \gamma(y)\rightarrow+\infty\ \ \ \text{as}\ \ \ |y|\rightarrow+\infty.
 \end{equation}
 For each $y\in\mathbb{R}^N$, $|y|>r$, we choose $y_1,...,y_{\gamma(y)}\in\mathbb{R}^N$ as above,
 then by \eqref{17}, \eqref{18} and \eqref{16}, we get
 \begin{align*}
   c & \geq \int_{\mathbb{R}^N}|u_n(x)|^{p(x)}dx\geq \sum_{i=1}^{\gamma(y)}\int_{B(y_i,r)}|u_n(x)|^{p(x)}dx\\
   &\geq \gamma(y) \int_{B(y,r)}|u_n(x)|^{p(x)}dx.
 \end{align*}
 Therefore $$\int_{B(y,r)}|u_n(x)|^{p(x)}dx\leq\frac{c}{\gamma(y)}\rightarrow0,\ \ \text{as}\ \ |y|\rightarrow+\infty.$$

 So, for arbitrary $\epsilon>0$, there exists
$R_{\epsilon}>0$ such that \begin{equation}\label{19}
                             \sup_{|y|\geq R_\epsilon}\int_{B(y,r)}|u_n(x)|^{p(x)}dx\leq\epsilon\ \ \ \text{for all}\ \ n\in\mathbb{N}.
                           \end{equation}
According to Theorem \ref{embedding in frac}, the embedding $$W^{s,p(.),\tilde{p}(.,.)}(B(0,r+R_\epsilon))\hookrightarrow L^{p(x)}(B(0,r+R_\epsilon))$$ is compact and thus $u_n\rightarrow0$ in $L^{p(x)}(B(0,r+R_\epsilon))$,  which implies that
\begin{equation}\label{20}
\int_{B(0,r+R_\epsilon)}|u_n(x)|^{p(x)}dx\rightarrow0,\ \ \ n\rightarrow+\infty.
\end{equation}
For $|y|<R_{\epsilon}$, it comes that $$\int_{B(y,r)}|u_n(x)|^{p(x)}dx\leq \int_{B(0,r+R_\epsilon)}|u_n(x)|^{p(x)}dx.$$
Then, \eqref{20} and the previous inequality yield \begin{equation}\label{zeyid}
      \sup_{|y|<R_\epsilon}\int_{B(y,r)}|u_n(x)|^{p(x)}dx\leq\epsilon\ \ \ \text{for all}\ \ n\in\mathbb{N}.
    \end{equation}
    Combining \eqref{19} and \eqref{zeyid}, we infer that
     $$ \sup_{y\in\mathbb{R}^N}\int_{B(y,r)}|u_n(x)|^{p(x)}dx\leq\epsilon\ \ \ \text{for all}\ \ n\in\mathbb{N}.$$

It follows from Theorem \ref{Lions}, that $u_n\rightarrow0$ in $L^{\alpha(x)}(\mathbb{R}^N)$ for $p\ll\alpha\ll\tilde{p}^{*}_s$. Thus the proof is completed.
\end{proof}

\section{Variational framework and proof of Theorem \ref{thm1}}

In this section, we will apply the results obtained in the previous section to
prove Theorem \ref{thm1}. The problem \eqref{eq1} has a variational structure, namely its
solutions can be found as critical points of a functional, the so-called energy functional.
Hence, our problem can be studied by variational methods.

\begin{definition}\label{weak solution}
 We say that $u\in W^{s,p(.),\tilde{p}(.,.)}(\mathbb{R}^N)$ is a weak solution of \eqref{eq1}, if for every $v\in W^{s,p(.),\tilde{p}(.,.)}(\mathbb{R}^N)$, we have
  \begin{align*}
  &M\big{(}X(u)\big{)}
  \int_{\mathbb{R}^N}\int_{\mathbb{R}^N}\frac{|u(x)-u(y)|^{\tilde{p}(x,y)-2}(u(x)-u(y))(v(x)-v(y))}{|x-y|^{N+s\tilde{p}(x,y)}}dxdy\\&
  =\int_{\mathbb{R}^N}\int_{\mathbb{R}^N}\frac{F(x,u(x))f(y,u(y))v(y)}{|x-y|^{\lambda(x,y)}}dxdy.
  \end{align*}
\end{definition}

Define the functional $I:W^{s,p(.),\tilde{p}(.,.)}(\mathbb{R}^N)\rightarrow\mathbb{R}$ by
$$I(u)=\Phi(u)-\Psi(u),$$ where
$$\Phi(u)=\mathcal{M}\bigg{(}\int_{\mathbb{R}^N}\int_{\mathbb{R}^N}\frac{|u(x)-u(y)|^{\tilde{p}(x,y)}}{\tilde{p}(x,y)|x-y|^{N+s\tilde{p}(x,y)}}dxdy\bigg{)}
=\mathcal{M}\big{(}X(u)\big{)},$$ and
$$\Psi(u)=\frac{1}{2}\int_{\mathbb{R}^N}\int_{\mathbb{R}^N}\frac{F(x,u(x))F(y,u(y))}{|x-y|^{\lambda(x,y)}}dxdy.$$
\begin{lemma}
The functional $I$ is of class $C^1(W^{s,p(.),\tilde{p}(.,.)}(\mathbb{R}^N),\mathbb{R})$ and
\begin{align*}
\langle I^{'}(u),v\rangle&=
M\big{(}X(u)\big{)}
\int_{\mathbb{R}^N}\int_{\mathbb{R}^N}\frac{|u(x)-u(y)|^{\tilde{p}(x,y)-2}(u(x)-u(y))(v(x)-v(y))}{|x-y|^{N+s\tilde{p}(x,y)}}dxdy\\
&-\int_{\mathbb{R}^N}\int_{\mathbb{R}^N}\frac{F(x,u(x))f(y,u(y))v(y)}{|x-y|^{\lambda(x,y)}}dxdy,
\end{align*}
 for all
$u,v\in W^{s,p(.),\tilde{p}(.,.)}(\mathbb{R}^N)$.
\end{lemma}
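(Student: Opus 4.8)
The plan is to establish the $C^1$ regularity of $I=\Phi-\Psi$ by treating the two summands separately, since each enjoys a natural differentiability statement. For $\Phi$, I would write $\Phi=\mathcal M\circ X$, where $X:W^{s,p(.),\tilde p(.,.)}(\mathbb R^N)\to\mathbb R^+_0$ is the modular-type functional appearing in the definition of $[\cdot]_{s,\tilde p(.,.)}$ divided by $\tilde p(x,y)$. The key step is that $X$ is Gâteaux differentiable with derivative
\begin{equation*}
\langle X'(u),v\rangle=\int_{\mathbb R^N}\int_{\mathbb R^N}\frac{|u(x)-u(y)|^{\tilde p(x,y)-2}(u(x)-u(y))(v(x)-v(y))}{|x-y|^{N+s\tilde p(x,y)}}\,dx\,dy,
\end{equation*}
and that this derivative is continuous from $W^{s,p(.),\tilde p(.,.)}(\mathbb R^N)$ into its dual; this is standard for the fractional $p(x)$-modular and is already recorded in \cite{anouar1, anouar2}. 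Composing with $\mathcal M\in C^1(\mathbb R^+_0,\mathbb R)$ (since $M$ is continuous) and applying the chain rule gives $\Phi\in C^1$ with $\langle\Phi'(u),v\rangle=M(X(u))\langle X'(u),v\rangle$, which is exactly the first line of the claimed formula.

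For $\Psi$, I would first observe that the double integral is finite for every $u\in W^{s,p(.),\tilde p(.,.)}(\mathbb R^N)$: by $(F_1)$, $|F(x,t)|\le \frac{L}{r(x)}|t|^{r(x)}\le L|t|^{r(x)}$, so $|F(\cdot,u(\cdot))|\le L|u|^{r(\cdot)}$ pointwise, and Theorem \ref{Hardy} applied with this $r$ and $h$ bounds
\begin{equation*}
\int_{\mathbb R^N}\int_{\mathbb R^N}\frac{|F(x,u(x))||F(y,u(y))|}{|x-y|^{\lambda(x,y)}}\,dx\,dy\le C\big(\||u|^{r(.)}|_{h_+}^2+\||u|^{r(.)}|_{h_-}^2\big)<\infty,
\end{equation*}
the last quantity being finite by the continuous embedding $W^{s,p(.),\tilde p(.,.)}(\mathbb R^N)\hookrightarrow L^{r(x)}(\mathbb R^N)$ of Theorem \ref{embedding in frac} together with $(F_1)$. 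Then I would compute the Gâteaux derivative of $\Psi$ by differentiating under the integral sign: the symmetry of $\lambda$ and the product structure $F(x,u(x))F(y,u(y))$ make the two cross terms coincide, cancelling the factor $\frac12$, and yielding
\begin{equation*}
\langle\Psi'(u),v\rangle=\int_{\mathbb R^N}\int_{\mathbb R^N}\frac{F(x,u(x))f(y,u(y))v(y)}{|x-y|^{\lambda(x,y)}}\,dx\,dy.
\end{equation*}
Differentiation under the integral is justified by the mean value theorem and a dominated-convergence argument using the growth bounds on $f$ and $F$ from $(F_1)$, with the dominating function supplied again by the Hardy--Littlewood--Sobolev inequality of Theorem \ref{Hardy} (applied to $|u|+|v|$).

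Finally I would prove that $u\mapsto\Psi'(u)$ is continuous from $W^{s,p(.),\tilde p(.,.)}(\mathbb R^N)$ to its dual. Given $u_n\to u$ in the norm, the embedding into $L^{r(x)}(\mathbb R^N)$ gives $u_n\to u$ there, hence (up to a subsequence, using Proposition \ref{prp3}(v) and a Brezis--Lieb / Vitali-type argument) $f(\cdot,u_n)\to f(\cdot,u)$ and $F(\cdot,u_n)\to F(\cdot,u)$ in the appropriate variable-exponent Lebesgue spaces dictated by $h_\pm$; then the bilinear estimate in Theorem \ref{Hardy}, combined with a Hölder-type inequality for the Riesz-type kernel, shows $\sup_{\|v\|\le 1}|\langle\Psi'(u_n)-\Psi'(u),v\rangle|\to 0$. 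The main obstacle is precisely this continuity step for $\Psi'$: unlike the elliptic part, where one has a clean Nemytskii/duality argument, here one must control a \emph{double} integral against a singular kernel with a variable exponent $\lambda$, so the estimates must be routed entirely through the variable-exponent Hardy--Littlewood--Sobolev inequality (Theorem \ref{Hardy}) and the Nemytskii operator continuity in $L^{r(x)}$, keeping careful track of the exponents $h_-\le h_+$ and the constraint $r\in\mathcal C_h$. Once both $\Phi'$ and $\Psi'$ are shown to be continuous, $I=\Phi-\Psi\in C^1$ with the stated derivative, and the identity in Definition \ref{weak solution} is just $\langle I'(u),v\rangle=0$.
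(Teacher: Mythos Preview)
The paper does not actually prove this lemma: it simply states that ``the proof is completely analogous to [\cite{Alves}, Lemma 3.2] and it is omitted.'' Your outline is precisely the standard argument one finds in that reference (and in \cite{Biswas} for the Choquard part): split $I=\Phi-\Psi$, handle $\Phi=\mathcal M\circ X$ via the chain rule and the known $C^1$-regularity of the fractional $\tilde p(\cdot,\cdot)$-modular, and handle $\Psi$ by combining the growth condition $(F_1)$, the variable-exponent Hardy--Littlewood--Sobolev inequality (Theorem~\ref{Hardy}), the continuous embedding of Theorem~\ref{embedding in frac}, and Nemytskii-operator continuity in $L^{r(x)h_\pm}$. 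So your approach is correct and coincides with the one the paper defers to; there is nothing further to compare.
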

\begin{proof}
  The proof is completely analogous to [\cite{Alves}, Lemma 3.2] and it is omitted.
\end{proof}
\begin{remark}
 Notice $u\in W^{s,p(.),\tilde{p}(.,.)}(\mathbb{R}^N)$ is a critical point of the functional $I$ if and only if $u$ is a weak solution of \eqref{eq1}.
\end{remark}

We start by setting the following useful lemma.
\begin{lemma}
On $W^{s,p(.),\tilde{p}(.,.)}(\Omega)$,  it holds that
  \begin{equation}\label{important}
    [u]_{s,\tilde{p}(.,.)}\geq \frac{1}{6}|u|_{s,p,\tilde{p}(.,.)}.
  \end{equation}
\end{lemma}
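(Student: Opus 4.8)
The plan is to compare the two norms $\|u\|_{s,p,\tilde p(.,.)} = |u|_{p,\Omega}+[u]_{s,\tilde p(.,.),\Omega}$ and $|u|_{s,p,\tilde p(.,.)}$ (the Luxemburg-type norm built from the combined modular $\tilde\rho$), using the known equivalence \eqref{15}, namely $\tfrac12\|\cdot\|_{s,p,\tilde p,\Omega}\le|\cdot|_{s,p,\tilde p,\Omega}\le 2\|\cdot\|_{s,p,\tilde p,\Omega}$. Granting that, it suffices to show $[u]_{s,\tilde p(.,.)}\ge \tfrac13\|u\|_{s,p,\tilde p(.,.)}$, or equivalently $[u]_{s,\tilde p(.,.)}\ge\tfrac12\bigl(|u|_{p,\Omega}+[u]_{s,\tilde p(.,.)}\bigr)$ — but that last inequality is false in general, so I instead aim to bound $|u|_{p,\Omega}$ by $[u]_{s,\tilde p(.,.)}$ directly.

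The key step is therefore the following claim: $|u|_{p,\Omega}\le C\,[u]_{s,\tilde p(.,.),\Omega}$ for all $u\in W^{s,p(.),\tilde p(.,.)}(\Omega)$, i.e. the seminorm already controls the $L^{p(x)}$-norm. This should follow from the continuous embedding $W^{s,p(.),\tilde p(.,.)}(\Omega)\hookrightarrow L^{p(x)}(\Omega)$ (a special case $r(x)=p(x)$ of Theorem \ref{embedding in frac}, since $p(x)\le p(x)\ll\tilde p^*_s$ fails strictly but $p(x)\le r(x)$ with $r=p$ is still in range — more safely one uses a fractional Poincaré-type inequality on the bounded Lipschitz domain $\Omega$, which gives $|u|_{p,\Omega}\lesssim[u]_{s,\tilde p(.,.),\Omega}$ using $\tilde p(x,y)\le p(x)$ and $s\tilde p_+<N$). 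Concretely, I would run the argument at the level of modulars: from $\rho_{\tilde p}(u/[u]_{s,\tilde p})\le 1$ and the modular/norm relations of Proposition \ref{prp3} and Proposition \ref{prp5}, extract a bound $\int_\Omega|u|^{p(x)}\,dx\le c\,\max\{[u]_{s,\tilde p}^{p_-},[u]_{s,\tilde p}^{p_+}\}$, whence $|u|_{p,\Omega}\le c'\,[u]_{s,\tilde p}$ after the usual case split on whether $[u]_{s,\tilde p}$ is $\ge1$ or $<1$. Tracking the constants carefully — the factor $2(1+|\Omega|)$ from Proposition \ref{prp1}, the factor $2$ from Proposition \ref{prp2}, and the factor $2$ from \eqref{15} — should yield a total factor of $6$, which is exactly the constant claimed.

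Assembling: once $|u|_{p,\Omega}\le 5[u]_{s,\tilde p(.,.)}$ (say) is in hand, we get $\|u\|_{s,p,\tilde p}=|u|_{p,\Omega}+[u]_{s,\tilde p(.,.)}\le 6[u]_{s,\tilde p(.,.)}$, i.e. $[u]_{s,\tilde p(.,.)}\ge\tfrac16\|u\|_{s,p,\tilde p}$; combining with \eqref{15} gives $[u]_{s,\tilde p(.,.)}\ge\tfrac16\cdot\tfrac12|u|_{s,p,\tilde p}$, and one adjusts which of the two norms appears on the right to match the statement (the statement writes $|u|_{s,p,\tilde p(.,.)}$ on the right, so one should instead feed the chain through \eqref{15} in the direction $|u|_{s,p,\tilde p}\le 2\|u\|_{s,p,\tilde p}\le 2\cdot 6[u]_{s,\tilde p}$, giving $[u]_{s,\tilde p}\ge\tfrac1{12}|u|_{s,p,\tilde p}$ — so the sharp bookkeeping matters, and I would revisit the Poincaré constant to see whether $|u|_{p,\Omega}\le 2[u]_{s,\tilde p}$ suffices, which would directly give the stated $\tfrac16$).

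\textbf{Main obstacle.} The crux is proving the fractional Poincaré-type inequality $|u|_{p,\Omega}\le C[u]_{s,\tilde p(.,.),\Omega}$ with an explicit, small enough constant; the abstract embedding $W^{s,p(.),\tilde p(.,.)}(\Omega)\hookrightarrow L^{p(x)}(\Omega)$ gives it qualitatively but not with a controlled constant, so I expect the real work to be a direct modular estimate on the Gagliardo double integral — bounding $\int_\Omega|u-u_\Omega|^{p(x)}$ (or $\int_\Omega|u|^{p(x)}$ on the bounded domain) by the seminorm via a covering/averaging argument, then folding in the variable-exponent constants from Propositions \ref{prp1}–\ref{prp3} to land exactly on $6$.
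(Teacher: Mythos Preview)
Your approach hinges on establishing a fractional Poincar\'e-type inequality $|u|_{p,\Omega}\le C\,[u]_{s,\tilde{p}(.,.),\Omega}$, and this is a genuine gap: no such inequality holds on the spaces in question. On a bounded $\Omega$, any nonzero constant function has $[u]_{s,\tilde{p}}=0$ but $|u|_{p,\Omega}>0$. On $\mathbb{R}^N$ with constant exponents, the rescaling $u_\lambda(x)=u(x/\lambda)$ satisfies $[u_\lambda]_{s,p}/|u_\lambda|_p=\lambda^{-s}[u]_{s,p}/|u|_p\to 0$ as $\lambda\to\infty$, so the ratio is not bounded below. Note also that the continuous embedding $W^{s,p,\tilde{p}}(\Omega)\hookrightarrow L^{p(x)}(\Omega)$ you invoke compares $|u|_p$ with the \emph{full} norm $|u|_{s,p,\tilde{p}}$, not with the seminorm alone, so it does not yield the estimate you want even qualitatively.

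The paper's argument is completely different and does not attempt any Poincar\'e or embedding step. It splits into two cases. If $|u|_p\le[u]_{s,\tilde{p}}$, a direct modular computation gives
\[
\tilde{\rho}\Bigl(\frac{u}{2[u]_{s,\tilde{p}}}\Bigr)=\rho\Bigl(\frac{u}{2[u]_{s,\tilde{p}}}\Bigr)+\rho_{\tilde{p}}\Bigl(\frac{u}{2[u]_{s,\tilde{p}}}\Bigr)\le\rho\Bigl(\frac{u}{2|u|_p}\Bigr)+\rho_{\tilde{p}}\Bigl(\frac{u}{2[u]_{s,\tilde{p}}}\Bigr)\le\tfrac12+\tfrac12=1,
\]
whence $|u|_{s,p,\tilde{p}}\le 2[u]_{s,\tilde{p}}$. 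In the complementary case $|u|_p\ge[u]_{s,\tilde{p}}$, the paper asserts $[u]_{s,\tilde{p}}\ge\tfrac13\|u\|_{s,p,\tilde{p}}$ and then applies \eqref{15}. You should be aware, however, that this second assertion is not justified in the paper and is in fact contradicted by the same scaling example above --- so the stated inequality \eqref{important} is problematic as written, and neither your route nor the paper's closes it cleanly. The practical lesson is that the intended device is a two-line case split at the level of modulars, not an embedding or Poincar\'e argument; chasing constants through Propositions~\ref{prp1}--\ref{prp3} as you propose is heading in the wrong direction.
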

\begin{proof}
Let $u\in W^{s,p(.),\tilde{p}(.,.)}(\Omega)$. If $|u|_{p}\leq [u]_{s,\tilde{p}(.,.)}$, then
\begin{align*}
 \tilde{\rho}\bigg{(}\frac{u}{2 [u]_{s,\tilde{p}(.,.)}}\bigg{)} &= \rho\bigg{(}\frac{u}{2 [u]_{s,\tilde{p}(.,.)}}\bigg{)} + \rho_{\tilde{p}}\bigg{(}\frac{u}{2[u]_{s,\tilde{p}(.,.)}}\bigg{)}\\
  &\leq \rho\bigg{(}\frac{u}{2 |u|_{p}}\bigg{)} + \rho_{\tilde{p}}\bigg{(}\frac{u}{2 [u]_{s,\tilde{p}(.,.)}}\bigg{)}\\
  &\leq \frac{1}{2}+\frac{1}{2}=1.
\end{align*}
So $$[u]_{s,\tilde{p}(.,.)}\geq \frac{1}{2}|u|_{s,p,\tilde{p}(.,.)}.$$
Now, if $|u|_{p}\geq [u]_{s,\tilde{p}(.,.)}$, then $$[u]_{s,\tilde{p}(.,.)}\geq \frac{1}{3}\|u\|_{s,p,\tilde{p}}\geq \frac{1}{6}|u|_{s,p,\tilde{p}}.$$
Therefore, for any $u\in W^{s,p(.),\tilde{p}(.,.)}(\Omega)$,
$$[u]_{s,\tilde{p}(.,.)}\geq \frac{1}{6}|u|_{s,p,\tilde{p}(.,.)}.$$ Thus the proof.
\end{proof}

To overcome the lack of compactness of our problem, we restrict ourselves to $W^{s,p(.),\tilde{p}(.,.)}_{rad}$.

\begin{lemma}\label{geo}
  Suppose that the assumptions in Theorem \ref{thm1} hold. Then, the functional $I$ satisfies the following properties:
  \begin{enumerate}
    \item [(i)] There exists $\delta,\beta>0$, such that $I(u)\geq\beta>0$ for all $u\in W^{s,p(.),\tilde{p}(.,.)}_{rad}(\mathbb{R}^N)$, $\|u\|_{s,p,\tilde{p},\mathbb{R}^N}=\delta$.
    \item [(ii)] There exists $e\in  W^{s,p(.),\tilde{p}(.,.)}_{rad}(\mathbb{R}^N)$ such that $\|e\|_{s,p,\tilde{p},\mathbb{R}^N}>\delta$ and $I(e)<0$.
  \end{enumerate}
\end{lemma}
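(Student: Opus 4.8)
The statement is the standard mountain-pass geometry for the energy functional $I=\Phi-\Psi$ associated with problem \eqref{eq1}, restricted to the radial subspace. Since $W^{s,p(.),\tilde{p}(.,.)}_{rad}(\mathbb{R}^N)$ is a closed subspace on which all the modular and embedding estimates of Section 2 remain valid, I will work directly with the norm $\|\cdot\|_{s,p,\tilde{p},\mathbb{R}^N}$ (equivalent to $|\cdot|_{s,p,\tilde{p},\mathbb{R}^N}$ by \eqref{15}) and use the comparison $[u]_{s,\tilde{p}(.,.)}\geq \tfrac16 |u|_{s,p,\tilde{p}(.,.)}$ from \eqref{important} to pass from norm control to control of the Gagliardo modular $\rho_{\tilde p}$, which is what enters $\Phi$ through $X(u)$ and $\mathcal M$.

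\textbf{Step (i): local positivity.} First I would fix $u$ with $\|u\|_{s,p,\tilde{p},\mathbb{R}^N}=\delta$ for $\delta<1$ small. By $(M_1)$, $\mathcal M(t)=\int_0^t M(s)\,ds\geq m_0 t$, hence $\Phi(u)=\mathcal M(X(u))\geq m_0 X(u)\geq \frac{m_0}{\tilde p_+}\rho_{\tilde p}(u)$. Using \eqref{important} and Proposition \ref{prp5}(iv) (applied with the small-norm branch), $\rho_{\tilde p}(u)$ is bounded below by a fixed power of $\|u\|_{s,p,\tilde{p},\mathbb{R}^N}$, so $\Phi(u)\geq c_1\delta^{\tilde p_+}$ for $\delta$ small. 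Next I would estimate $\Psi(u)$ from above. By $(F_1)$, $|F(x,t)|\leq \frac{L}{r_-}|t|^{r(x)}$, so $\Psi(u)\leq C\int_{\mathbb{R}^N}\int_{\mathbb{R}^N}\frac{|u(x)|^{r(x)}|u(y)|^{r(y)}}{|x-y|^{\lambda(x,y)}}\,dx\,dy$; applying the Hardy–Littlewood–Sobolev inequality of Theorem \ref{Hardy} (legitimate since $r\in\mathcal C_h$), this is bounded by $C\big(\||u|^{r(.)}|_{h_+}^2+\||u|^{r(.)}|_{h_-}^2\big)$, and then by the continuous embedding $W^{s,p(.),\tilde{p}(.,.)}\hookrightarrow L^{r h_\pm}(\mathbb{R}^N)$ (Theorem \ref{embedding in frac}, using $p(x)\leq r(x)h_\pm(x)\ll\tilde p^*_s$) together with the modular–norm relations, one gets $\Psi(u)\leq C\|u\|^{2 r_-/\text{(power)}}\leq C'\delta^{\mu}$ for some exponent $\mu>\tilde p_+$, because $(F_1)$ imposes $r_->\tilde p_+$ and the HLS exponents inflate the growth further. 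Comparing, $I(u)\geq c_1\delta^{\tilde p_+}-C'\delta^{\mu}>0$ for $\delta$ sufficiently small; choose $\beta:=\frac{c_1}{2}\delta^{\tilde p_+}$.

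\textbf{Step (ii): existence of $e$ with $I(e)<0$.} I would fix any $w\in W^{s,p(.),\tilde{p}(.,.)}_{rad}(\mathbb{R}^N)\setminus\{0\}$ with, say, $\|w\|=1$, and study $I(tw)$ as $t\to+\infty$. Condition $(M_2)$, $M(t)t\leq\alpha\mathcal M(t)$, gives the growth bound $\mathcal M(t)\leq \mathcal M(1)t^{\alpha}$ for $t\geq1$, whence $\Phi(tw)=\mathcal M(X(tw))\leq C\,t^{\alpha\tilde p_+}$ for large $t$ (using $X(tw)\leq t^{\tilde p_+}X(w)$ when $t\geq 1$ is not quite monotone in the variable-exponent setting, so more carefully $\rho_{\tilde p}(tw)\leq t^{\tilde p_+}\rho_{\tilde p}(w)$ for $t\geq1$). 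For the lower term, $(F_2)$, $0<\theta F(x,t)\leq 2tf(x,t)$, yields by the usual Gronwall-type integration a bound $F(x,t)\geq c(x)|t|^{\theta/2}$ for $|t|$ large, with $c(x)$ locally bounded below on a set of positive measure; hence $\Psi(tw)\geq c\, t^{\theta}$ for large $t$ (the double integral inherits the full power $\theta$ since both factors carry $F$). Because $(F_2)$ assumes $\theta>\alpha\tilde p_+$, the negative term dominates and $I(tw)=\Phi(tw)-\Psi(tw)\to-\infty$; pick $t$ large enough that $\|tw\|>\delta$ and $I(tw)<0$, and set $e:=tw$.

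\textbf{Main obstacle.} The routine parts are the two geometric comparisons; the genuinely delicate point is bookkeeping the passage between norms and modulars in the variable-exponent framework — the exponents $\tilde p_\pm$, $r_\pm$, $h_\pm$, $\alpha$ all interact, and one must check that the exponent on $\delta$ in $\Psi$ is strictly larger than the exponent on $\delta$ in $\Phi$ (needs $r_->\tilde p_+$ plus the HLS inflation), and symmetrically that $\theta>\alpha\tilde p_+$ really forces $I(tw)\to-\infty$ rather than merely staying bounded. Care is also needed because $t\mapsto\rho_{\tilde p}(tw)$ is only sandwiched between $t^{\tilde p_-}$ and $t^{\tilde p_+}$ multiples of $\rho_{\tilde p}(w)$, so the inequalities must be oriented correctly (upper bound for $\Phi$, lower bound for $\Psi$) in each of the small-$t$ and large-$t$ regimes.
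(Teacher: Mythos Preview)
Your proposal is correct and follows essentially the same route as the paper: for (i), bound $\Phi$ from below via $(M_1)$, \eqref{important} and Proposition~\ref{prp5}, bound $\Psi$ from above via $(F_1)$ and the Hardy--Littlewood--Sobolev inequality (Theorem~\ref{Hardy}) plus the embedding, and compare powers using $2r_->\tilde p_+$; for (ii), use $(M_2)$ to get $\mathcal M(t)\leq \mathcal M(1)t^{\alpha}$ for $t\geq 1$ and $(F_2)$ to get $F(x,t)\gtrsim t^{\theta/2}$, then let $\theta>\alpha\tilde p_+$ do the work. The only refinement to make is that in (ii) you should take $w$ \emph{nonnegative} (the paper uses a nonnegative radial $\varphi\in C_0^\infty(\mathbb R^N)$), since $(F_2)$ is stated only for $t>0$ and the lower bound on $F$ therefore needs $tw(x)\geq 0$.
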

\begin{proof}
  $(i)$ Combining $(F_1)$, Proposition \ref{prp3} and Theorem \ref{Strauss}, we get
  \begin{align}\label{1222}
    |F(.,u(.))|_{L^{h_{+}}(\mathbb{R}^N)}&\leq c_2\bigg{(}\int_{\mathbb{R}^N}|u(x)|^{r(x)h_{+}}dx\bigg{)}^{\frac{1}{h_+}}\nonumber\\
                                      &\leq c_2\max\big{(}|u|^{r_{+}}_{r(x)h_{+}},|u|^{r_{-}}_{r(x)h_{+}}\big{)}\nonumber\\
                                      &\leq c_3\max\big{(}|u|_{s,p,\tilde{p},\mathbb{R}^N}^{r_{+}},|u|_{s,p,\tilde{p},\mathbb{R}^N}^{r_{-}}\big{)}
  \end{align}
  where the constant $c_2,c_3>0$ are independent of $u$. Similarly for $u\in W^{s,p(.),\tilde{p}(.,.)}_{rad}(\mathbb{R}^N)$ we can check that $F(x,u(x))\in L^{h_{-}}(\mathbb{R}^N)$. Hence from Theorem \ref{Hardy} and \eqref{1222}, we infer that
  \begin{align}\label{100}
    \bigg{|}\int_{\mathbb{R}^N}\int_{\mathbb{R}^N}\frac{F(x,u(x))F(y,u(y))}{|x-y|^{\lambda(x,y)}}dxdy\bigg{|} & \leq C\big{(} |F(.,u(.))|_{h_{+},\mathbb{R}^N}^2+|F(.,u(.))|_{h_{-},\mathbb{R}^N}^2\big{)}\nonumber\\
             &\leq c_4 \max\big{(}|u|_{s,p,\tilde{p},\mathbb{R}^N}^{2r_{+}},|u|_{s,p,\tilde{p},\mathbb{R}^N}^{2r_{-}}\big{)}
  \end{align}
  for some constants $C,c_4>0$ independent of $u$. By Proposition \ref{prp5} and \eqref{important},  for $|u|_{s,p,\tilde{p},\mathbb{R}^N}<1$, we deduce that
  \begin{align}\label{s7i7}
    \mathcal{M}\bigg{(}\int_{\mathbb{R}^N}\int_{\mathbb{R}^N}\frac{|u(x)-u(y)|^{\tilde{p}(x,y)}}{\tilde{p}(x,y)|x-y|^{N+s\tilde{p}(x,y)}}dxdy\bigg{)} & \geq m_0 [u]_{s,\tilde{p}(.,.)}^{\tilde{p}_+}\geq\frac{m_0}{6^{\tilde{p}_+}} |u|_{s,p,\tilde{p},\mathbb{R}^N}^{\tilde{p}_+}.
  \end{align}
  Using \eqref{100} and \eqref{s7i7}, for $|u|_{s,p,\tilde{p},\mathbb{R}^N}<1$, we obtain
  \begin{align*}
     I(u)& =\mathcal{M}\bigg{(}\int_{\mathbb{R}^N}\int_{\mathbb{R}^N}\frac{|u(x)-u(y)|^{\tilde{p}(x,y)}}{\tilde{p}(x,y)|x-y|^{N+s\tilde{p}(x,y)}}dxdy\bigg{)}
-\frac{1}{2}\int_{\mathbb{R}^N}\int_{\mathbb{R}^N}\frac{F(x,u(x))F(y,u(y))}{|x-y|^{\lambda(x,y)}}dxdy\\
&\geq \frac{m_0}{\tilde{p}_{+}6^{\tilde{p}_+}}|u|_{s,p,\tilde{p},\mathbb{R}^N}^{\tilde{p}_{+}}-c_5\max\big{(}|u|_{s,p,\tilde{p},\mathbb{R}^N}^{2r_{+}},|u|_{s,p,\tilde{p},\mathbb{R}^N}^{2r_{-}}\big{)}\\
&\geq \frac{m_0}{\tilde{p}_{+}6^{\tilde{p}_+}} |u|_{s,p,\tilde{p},\mathbb{R}^N}^{\tilde{p}_{+}}-c_5|u|_{s,p,\tilde{p},\mathbb{R}^N}^{2r_{-}},
  \end{align*}
  where $c_5>0$ independent of $u$. Since $r_{-}>\tilde{p}_{+}$, we can choose $\delta>0$ sufficiently small such that $I(u)\geq\beta>0$ for all $u\in W^{s,p(.),\tilde{p}(.,.)}_{rad}(\mathbb{R}^N)$ with $|u|_{s,p,\tilde{p},\mathbb{R}^N}=\delta$.\\

  $(ii)$ The condition $(F_2)$ implies that $$F(x,t)\geq l_1 t^{\theta/2}\ \ \text{for all}\ \ (x,t)\in\mathbb{R}^N\times\mathbb{R}\ \ \text{and}\ \ t\geq l_2,$$
  where $l_1,l_2>0$. Considering a nonnegative function $\varphi\in C_0^{\infty}(\mathbb{R}^N)\setminus\{0\}$, using Proposition \ref{prp5} and the last inequality, we deduce that \begin{align*}
        I(t\varphi)& =\mathcal{M}\bigg{(}\int_{\mathbb{R}^N}\int_{\mathbb{R}^N}\frac{|t\varphi(x)-t\varphi(y)|^{\tilde{p}(x,y)}}{\tilde{p}(x,y)|x-y|^{N+s\tilde{p}(x,y)}}dxdy\bigg{)}
-\frac{1}{2}\int_{\mathbb{R}^N}\int_{\mathbb{R}^N}\frac{F(x,t\varphi(x))F(y,t\varphi(y))}{|x-y|^{\lambda(x,y)}}dxdy\\
&\leq \frac{\mathcal{M}(1)}{\tilde{p}_{+}}\bigg{(}\int_{\mathbb{R}^N}\int_{\mathbb{R}^N}\frac{|t\varphi(x)-t\varphi(y)|^{\tilde{p}(x,y)}}{|x-y|^{N+s\tilde{p}(x,y)}}dxdy\bigg{)}^{\alpha}-\frac{l_1^2 t^{\theta}}{2}\int_{\mathbb{R}^N}\int_{\mathbb{R}^N}\frac{|\varphi(x)|^{\theta/2}|\varphi(y)|^{\theta/2}}{|x-y|^{\lambda(x,y)}}dxdy\\
&\leq \frac{\mathcal{M}(1)}{\tilde{p}_{+}}t^{\alpha\tilde{p}_{+}}|\varphi|_{s,p,\tilde{p},\mathbb{R}^N}^{\alpha\tilde{p}_{+}}-\frac{l_1^2 t^{\theta}}{2}\int_{\mathbb{R}^N}\int_{\mathbb{R}^N}\frac{|\varphi(x)|^{\theta/2}|\varphi(y)|^{\theta/2}}{|x-y|^{\lambda(x,y)}}dxdy.
       \end{align*}
       Since $\alpha\tilde{p}_{+}<\theta$, it follows that $I(t\varphi)\rightarrow-\infty$ as $t\rightarrow+\infty$. This finishes the proof.
\end{proof}

\begin{lemma}\label{radialsolution}
  Assume that $(M_1)-(M_2)$ and $(F_1)-(F_2)$ hold. Then the functional $I|_{W^{s,p(.),\tilde{p}(.,.)}_{rad}}$  admits a non-trivial critical point $u_1$ in
$W^{s,p(.),\tilde{p}(.,.)}_{rad}$.
\end{lemma}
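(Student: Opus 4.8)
The plan is to verify the hypotheses of the Mountain Pass Theorem for the restricted functional $I|_{W^{s,p(.),\tilde{p}(.,.)}_{rad}}$ and then invoke Palais' principle of symmetric criticality to conclude that a critical point of the restriction is in fact a critical point of $I$ on the whole space, hence (by the preceding remark) a weak solution of \eqref{eq1}. Since $p$, $\lambda$ and $F$ are radial and $\tilde{p}$ satisfies \eqref{tilderadial}, the functional $I$ is invariant under the natural isometric action of $O(N)$ on $W^{s,p(.),\tilde{p}(.,.)}(\mathbb{R}^N)$, and $W^{s,p(.),\tilde{p}(.,.)}_{rad}$ is exactly the fixed-point set of this action; this makes the principle of symmetric criticality applicable. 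The geometric hypotheses of the Mountain Pass Theorem are already established in Lemma \ref{geo}: part $(i)$ gives the uniform positivity on a small sphere $\|u\|_{s,p,\tilde{p},\mathbb{R}^N}=\delta$, and part $(ii)$ produces $e$ with $I(e)<0$ and $\|e\|_{s,p,\tilde{p},\mathbb{R}^N}>\delta$.

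The remaining ingredient, and the main obstacle, is the Palais--Smale condition for $I|_{W^{s,p(.),\tilde{p}(.,.)}_{rad}}$ at the mountain pass level $c$. First I would show that any $(PS)_c$ sequence $(u_n)$ is bounded: combining $\langle I'(u_n),u_n\rangle=o(\|u_n\|)$ with $I(u_n)\to c$ and using $(M_2)$ together with $(F_2)$, one gets, for $\|u_n\|$ large,
\begin{align*}
c+o(1)+o(1)\|u_n\|_{s,p,\tilde{p},\mathbb{R}^N}&\geq I(u_n)-\frac{1}{\theta}\langle I'(u_n),u_n\rangle\\
&\geq \Big(\frac{1}{\alpha\tilde{p}_+}-\frac{1}{\theta}\Big)M\big(X(u_n)\big)X(u_n)\tilde{p}_+,
\end{align*}
and since $\theta>\alpha\tilde{p}_+$ and $M(t)\geq m_0$ by $(M_1)$, this bounds $[u_n]_{s,\tilde{p}(.,.)}$, hence $\|u_n\|_{s,p,\tilde{p},\mathbb{R}^N}$ via \eqref{important} and \eqref{15}. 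Here one must be careful with the modular--norm relations of Proposition \ref{prp5}, splitting the cases $[u_n]\gtrless 1$.

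With boundedness in hand, pass to a subsequence $u_n\rightharpoonup u$ weakly in $W^{s,p(.),\tilde{p}(.,.)}_{rad}(\mathbb{R}^N)$. By Theorem \ref{Strauss} the embedding $W^{s,p(.),\tilde{p}(.,.)}_{rad}(\mathbb{R}^N)\hookrightarrow L^{r(x)h_\pm}(\mathbb{R}^N)$ is compact for the relevant exponents (since $r\in\mathcal{C}_h$ forces $p\ll r h_\pm\ll\tilde{p}^*_s$), so $u_n\to u$ strongly there; using $(F_1)$ and the Hardy--Littlewood--Sobolev inequality of Theorem \ref{Hardy} one then shows that the nonlocal term and its derivative converge, i.e. $\Psi'(u_n)\to\Psi'(u)$ strongly in the dual. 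Writing out $\langle I'(u_n)-I'(u_m),u_n-u_m\rangle\to 0$ and isolating the Kirchhoff elliptic part, the monotonicity of the map $t\mapsto|t|^{\tilde p(x,y)-2}t$ together with $M(\cdot)\geq m_0$ yields $[u_n-u]_{s,\tilde{p}(.,.)}\to 0$, and combined with $|u_n-u|_{p,\mathbb{R}^N}\to0$ (from the compact embedding at exponent $p$, which is covered since $p\ll r h_-$... actually one uses the $L^{r(x)}$-convergence and interpolation, or directly that $u_n\to u$ in $L^{p(x)}$ is not automatic — here the continuity of $M$ is what upgrades weak to strong convergence of $X(u_n)$). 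This gives $u_n\to u$ in norm, establishing $(PS)_c$. Applying the Mountain Pass Theorem then furnishes a critical point $u_1$ of $I|_{W^{s,p(.),\tilde{p}(.,.)}_{rad}}$ with $I(u_1)=c\geq\beta>0$, so in particular $u_1\neq 0$, which is the desired conclusion.
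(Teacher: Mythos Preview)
Your overall strategy coincides with the paper's: use Lemma~\ref{geo} for the mountain-pass geometry, establish the Palais--Smale condition, and apply the Mountain Pass Theorem to produce a nontrivial critical point of the restricted functional. Two points of divergence are worth noting.

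First, the discussion of the principle of symmetric criticality does not belong here. The lemma only claims the existence of a critical point of $I|_{W^{s,p(.),\tilde{p}(.,.)}_{rad}}$; the upgrade to a critical point on the whole space (and hence to a weak solution of \eqref{eq1}) is precisely the content of the subsequent proof of Theorem~\ref{thm1}, where Lemma~\ref{psc} is invoked after verifying that $G$ acts by isometries and that $I\circ a=I$. By folding that argument into this lemma you conflate two separate results.

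Second, the paper does not carry out the Palais--Smale verification at all but simply cites Lemma~4.4 of \cite{Biswas}. Your sketch is along the right lines --- boundedness via $(M_1)$, $(M_2)$, $(F_2)$ and the gap $\theta>\alpha\tilde p_+$, then strong convergence of the Choquard term via the compact radial embedding of Theorem~\ref{Strauss} combined with Theorem~\ref{Hardy} --- but the passage where you try to obtain $|u_n-u|_{p,\mathbb{R}^N}\to 0$ is left hanging: Theorem~\ref{Strauss} requires $p\ll\alpha$ and therefore gives no compactness at the endpoint exponent $p(x)$, as you yourself notice mid-sentence. In the paper's framework this is handled through \eqref{important}, which bounds the full norm $|u|_{s,p,\tilde p}$ by a multiple of the Gagliardo seminorm $[u]_{s,\tilde p(.,.)}$, so that proving $[u_n-u]_{s,\tilde p(.,.)}\to 0$ via the monotonicity argument already yields norm convergence without a separate $L^{p(x)}$ step. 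If you want your sketch to stand on its own, that is the inequality to invoke at the end.
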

\begin{proof}
   Note that Lemma $4.4$ in \cite{Biswas} guarantees that $I$ satisfies the Palais-Smale condition, together with Lemma \ref{geo} and by applying the Mountain pass Theorem, there exists $u_1\in W^{s,p(.),\tilde{p}(.,.)}_{rad}(\mathbb{R}^N)$, a critical point of $I|_{W^{s,p(.)}_{rad}}$, that is
$$\langle I^{'}(u_1),\varphi\rangle=0\ \ \ \text{for any}\ \ \ \varphi\in W^{s,p(.),\tilde{p}(.,.)}_{rad}(\mathbb{R}^{N}).$$
\end{proof}

The function $u_1$ given by Lemma \ref{radialsolution} is a critical point of $I|_{W^{s,p(.),\tilde{p}(.,.)}_{rad}}$. We have to show that $u_1$ is a  critical point of $I$, that is in sense of definition \ref{weak solution}. To this aim we will use the well known principle of symmetric criticality, (\cite{Palais}).\\

Let $(E,\|.\|_E)$ be a reflexive Banach space. Suppose that $G$ is a subgroup of isometries $g:E\rightarrow E$.
Consider the $G$-invariant closed subspace of $E$ $$\Sigma=\{u\in E:\ gu=u\ \text{for all}\ g\in G\}.$$

\begin{lemma}(Proposition 3.1 of \cite{daniel} or Lemma 5.4 of \cite{Pucci1})\label{psc}
  Let $E$,$G$ and $\Sigma$ be as before and let $J$ be a $C^1$ functional defined on $E$ such that $J\circ g=J$ for all $g\in G$. Then $u\in \Sigma$ is a critical point of $J$ if and only if $u$ is a critical point of $J|_{\Sigma}$.
\end{lemma}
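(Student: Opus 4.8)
The final statement is Lemma~\ref{psc}, the principle of symmetric criticality. The plan is to follow the classical Palais argument, adapted to the reflexive Banach space setting as in \cite{daniel, Pucci1}.

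First I would observe that one direction is essentially trivial: if $u\in\Sigma$ is a critical point of $J$ on all of $E$, then in particular $\langle J'(u),v\rangle=0$ for every $v\in\Sigma\subset E$, so $u$ is a critical point of $J|_\Sigma$. No structure is needed here beyond $\Sigma$ being a subspace.

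The substantive direction is the converse. Suppose $u\in\Sigma$ and $\langle J'(u),v\rangle=0$ for all $v\in\Sigma$; I must show $J'(u)=0$ in $E^*$, i.e. $\langle J'(u),w\rangle=0$ for \emph{every} $w\in E$. The key steps are: (1) From $J\circ g=J$ and the chain rule, differentiate to get the equivariance relation $\langle J'(gu),gv\rangle=\langle J'(u),v\rangle$ for all $g\in G$, $u,v\in E$; since $gu=u$, this gives $\langle J'(u),gv\rangle=\langle J'(u),v\rangle$, so $J'(u)\in E^*$ is itself $G$-invariant under the dual action. (2) Decompose an arbitrary $w\in E$ by ``averaging'' over $G$ to produce its $\Sigma$-component: since in our application $G$ is the (compact, in the relevant sense) group generated by the orthogonal action making functions radial, one forms $\bar w$ as an average/projection of the orbit $\{gw:g\in G\}$ onto $\Sigma$; more precisely, one uses that $E=\Sigma\oplus\Sigma^\perp$ in a suitable sense, where $\Sigma^\perp$ is spanned by elements of the form $w-gw$. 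Actually the cleanest route, the one used in \cite{daniel, Pucci1}, is: show that for any $w\in E$ one can write $w = \bar w + (w-\bar w)$ with $\bar w\in\Sigma$, and that $w-\bar w$ lies in the closed span of $\{v-gv : v\in E,\ g\in G\}$. (3) Since $J'(u)$ is $G$-invariant, $\langle J'(u), v-gv\rangle = \langle J'(u),v\rangle - \langle J'(u),gv\rangle = 0$ by step (1), hence $J'(u)$ annihilates the closed span $\Sigma^\perp$. (4) Combining: $\langle J'(u),w\rangle = \langle J'(u),\bar w\rangle + \langle J'(u), w-\bar w\rangle = 0 + 0 = 0$, where the first term vanishes because $\bar w\in\Sigma$ and $u$ is critical for $J|_\Sigma$, and the second by step (3). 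This proves $J'(u)=0$.

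The main obstacle is step (2): carrying out the averaging / orthogonal-splitting argument in a general reflexive Banach space rather than a Hilbert space, where there is no orthogonal projection at hand. The standard fix (due to Palais, and reproduced in \cite{daniel, Pucci1}) is that for a group of isometries acting on a uniformly convex reflexive space one still obtains a well-defined ``fixed-point projection'' onto $\Sigma$ by a Mazur-type argument on the closed convex hull of an orbit, or one restricts to the finite/compact group of symmetries actually relevant to the radial problem, where one can genuinely average. Since the statement is quoted from \cite{daniel, Pucci1}, I would simply invoke their proof for this technical core and present the remaining steps (1), (3), (4) as above, noting that in our application $E = W^{s,p(.),\tilde p(.,.)}(\mathbb{R}^N)$ (or its relevant closed subspace), $G$ is the group of rotations of $\mathbb{R}^N$ acting by $(gu)(x)=u(g^{-1}x)$, $\Sigma = W^{s,p(.),\tilde p(.,.)}_{rad}(\mathbb{R}^N)$, and $J\circ g = J$ holds precisely because $p$, $\lambda$ and $F$ are radially symmetric and $\tilde p$ satisfies \eqref{tilderadial}, so that all the integrals defining $\Phi$ and $\Psi$ are rotation-invariant.
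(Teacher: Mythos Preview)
The paper does not give its own proof of Lemma~\ref{psc}: it is stated with attribution to Proposition~3.1 of \cite{daniel} and Lemma~5.4 of \cite{Pucci1} and then used as a black box in the proof of Theorem~\ref{thm1}. So there is nothing in the paper to compare your argument against.

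That said, your sketch is sound and follows the standard route found in those references. The trivial direction and steps (1), (3), (4) are correct as written. You correctly isolate step~(2)---producing a $G$-equivariant projection $w\mapsto\bar w$ onto $\Sigma$ in a reflexive (not Hilbert) Banach space---as the only nontrivial ingredient, and you appropriately defer it to \cite{daniel,Pucci1}. In the concrete application here, $G$ comes from the compact group $SO(N)$, so the averaging via Haar measure that underlies those references is available; this is exactly why the cited versions suffice. Your closing remarks about verifying $J\circ g=J$ via the radial symmetry of $p,\lambda,F$ and condition~\eqref{tilderadial} on $\tilde p$ actually belong to the proof of Theorem~\ref{thm1} rather than to Lemma~\ref{psc} itself, and the paper carries out precisely that verification there; but including them does no harm.
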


Finally, we have all the ingredients to complete the proof of Theorem \ref{thm1}.

\begin{proof}[{\bf Proof of Theorem \ref{thm1}}]

 Let $SO(N)$ denote the special orthogonal group, that is
$$SO(N)=\{A\in \mathcal{M}_{N\times N}(\mathbb{R}):\ \ A^{t}A=I_N\ \text{and}\ det(A)=1\}.$$
Consider the following subgroup of linear operators of $W^{s,p(.),\tilde{p}(.,.)}(\mathbb{R}^{N})$ in itself
$$G=\bigg{\{}a:W^{s,p(.),\tilde{p}(.,.)}(\mathbb{R}^{N})\rightarrow W^{s,p(.),\tilde{p}(.,.)}(\mathbb{R}^{N}):\ \ au=u\circ A,\ \text{where}\ A\in SO(N)\bigg{\}}.$$
We have $$W^{s,p(.),\tilde{p}(.,.)}_{rad}(\mathbb{R}^{N})=\{u\in W^{s,p(.),\tilde{p}(.,.)}(\mathbb{R}^{N}):\ gu=u\ \text{for all}\ g\in G\}.$$

Let's prove that $G$ is a subgroup of isometries of $W^{s,p(.),\tilde{p}(.,.)}(\mathbb{R}^{N})$: Fixe $u$ in $W^{s,p(.),\tilde{p}(.,.)}(\mathbb{R}^{N})$, since $|x-y|=|A(x-y)|=|Ax-Ay|=|x^{'}-y^{'}|$, $det\ A=1$ and $p,\tilde{p}$ are radial functions, for all $a\in G$, then
\begin{align*}
  |au|_{s,p,\tilde{p},\mathbb{R}^N}&=\inf\bigg{\{}\lambda>0:\ \int_{\mathbb{R}^N}\bigg{|}\frac{u(Ax)}{\lambda}\bigg{|}^{p(x)}dx+
  \int_{\mathbb{R}^N}\int_{\mathbb{R}^N}\frac{|u(Ax)-u(Ay)|^{\tilde{p}(x,y)}}{\lambda^{\tilde{p}(x,y)}|x-y|^{N+s\tilde{p}(x,y)}}dxdy<1\bigg{\}}\\
  &=\inf\bigg{\{}\lambda>0:\ \int_{\mathbb{R}^N}\bigg{|}\frac{u(x^{'})}{\lambda}\bigg{|}^{p(A^{t}x^{'})}dx^{'}+
  \int_{\mathbb{R}^N}\int_{\mathbb{R}^N}\frac{|u(x^{'})-u(y^{'})|^{\tilde{p}(A^{t}x^{'},A^{t}y^{'})}}{\lambda^{\tilde{p}(A^{t}x^{'},A^{t}y^{'})}
  |x^{'}-y^{'}|^{N+s\tilde{p}(A^{t}x^{'},A^{t}y^{'})}}dx^{'}dy^{'}<1\bigg{\}}\\
  &=\inf\bigg{\{}\lambda>0:\ \int_{\mathbb{R}^N}\bigg{|}\frac{u(x^{'})}{\lambda}\bigg{|}^{p(x^{'})}dx^{'}+
  \int_{\mathbb{R}^N}\int_{\mathbb{R}^N}\frac{|u(x^{'})-u(y^{'})|^{\tilde{p}(x^{'},y^{'})}}{\lambda^{\tilde{p}(x^{'},y^{'})}
  |x^{'}-y^{'}|^{N+s\tilde{p}(x^{'},y^{'})}}dx^{'}dy^{'}<1\bigg{\}}\\
  &=|u|_{s,p,\tilde{p},\mathbb{R}^N}.
\end{align*}

To apply Lemma \ref{psc} to the functional $I$ , we need to show that $I \circ a = I$ for all $a \in G$.
Fixed $u\in W^{s,p(.),\tilde{p}(.,.)}(\mathbb{R}^{N})$, since $p,\tilde{p}$ and $f$ are radial functions, for all $a \in G$ it comes that
\begin{align*}
  (I\circ a)(u) & =\mathcal{M}\bigg{(}\int_{\mathbb{R}^N}\int_{\mathbb{R}^N}\frac{|u(Ax)-u(Ay)|^{\tilde{p}(x,y)}}{\tilde{p}(x,y)|x-y|^{N+s\tilde{p}(x,y)}}dxdy\bigg{)}
-\frac{1}{2}\int_{\mathbb{R}^N}\int_{\mathbb{R}^N}\frac{F(x,u(Ax))F(y,u(Ay))}{|x-y|^{\lambda(x,y)}}dxdy\\
&=\mathcal{M}\bigg{(}\int_{\mathbb{R}^N}\int_{\mathbb{R}^N}\frac{|u(x^{'})-u(y^{'})|^{\tilde{p}
(A^{t}x^{'},A^{t}y^{'})}}{\tilde{p}(A^{t}x^{'},A^{t}y^{'})|x^{'}-y^{'}|^{N+s\tilde{p}(A^{t}x^{'},A^{t}y^{'})}}dxdy\bigg{)}
-\frac{1}{2}\int_{\mathbb{R}^N}\int_{\mathbb{R}^N}\frac{F(x,u(x^{'}))F(y,u(y^{'}))}{|x^{'}-y^{'}|^{\lambda(A^{t}x,A^{t}y)}}dxdy\\
&=I(u).
\end{align*}

Then, Lemma \ref{psc} implies that $u_1$ is a critical point of $I$ in the whole space $W^{s,p(.),\tilde{p}(.,.)}(\mathbb{R}^{N})$. Thus, $u_1$ is a solution
of \eqref{eq1} in the sense of definition \ref{weak solution}.
\end{proof}

\end{document}